\def\pbla{1.5_\lambda}
\def\pblaest{1.5_{\lambda^*}}
\newcommand{\R}{\mathbb{R}}
\newtheorem{theorem}{Theorem}[section]
\newtheorem{proposition}[theorem]{Proposition}
\newtheorem{remark}[theorem]{Remark}
\numberwithin{equation}{section}
\begin{document}

\title[Regularity of minimizers up to dimension four]
{Regularity of minimizers of semilinear elliptic problems 
up to dimension four}

\author{Xavier Cabr{\'e}}
\thanks{The author was supported by
grants MTM2008-06349-C03-01 (Spain) and 2009SGR345 (Catalunya)}
\address{ICREA and Universitat Polit{\`e}cnica de Catalunya,
Departament de Matem{\`a}-tica Aplicada I, Diagonal 647, 08028
Barcelona, Spain}
\email{xavier.cabre@upc.edu}

\begin{abstract}
We consider the class of semi-stable solutions
to semilinear equations $-\Delta u=f(u)$ in a bounded smooth
domain $\Omega$ of $\R^n$ (with $\Omega$ convex in some results). 
This class includes all
local minimizers, minimal, and extremal solutions.
In dimensions $n\le 4$, we establish an priori $L^\infty$ bound 
which holds for every positive semi-stable solution and every nonlinearity~$f$.
This estimate leads to the boundedness of all extremal solutions
when $n=4$ and $\Omega$ is convex. This result was previously known
only in dimensions $n\le 3$ by a result of G.~Nedev. 
In dimensions $5\leq n\leq9$ the boundedness
of all extremal solutions remains an open question. It
is only known to hold in the radial case $\Omega=B_R$ by 
a result of A.~Capella and the author.
\end{abstract}

\maketitle

\section{Introduction and results}

Let $f:\R\to\R$ be a $C^\infty$ function and $F$ a primitive of $f$, i.e. $F'=f$.
Let $\Omega\subset \mathbb{R}^n$ be a bounded $C^\infty$ domain.
Consider the energy functional
\begin{equation}
 E(u)=\int_\Omega \frac{1}{2}|\nabla u|^2 -F(u)\ dx.
\label{eq:energy}
\end{equation}
Its Euler-Lagrange equation, under zero boundary conditions, is given by
\begin{equation}\label{problem}
\left\{
\begin{array}{rcll}
-\Delta u &=&f(u)&\textrm{in }\Omega\\
u&=&0&\textrm{on }\partial \Omega.
\end{array}\right.
\end{equation}

We say that a function $u\in C^1_0(\overline{\Omega})$ (i.e.,
a $C^1(\overline{\Omega})$ function vanishing on $\partial\Omega$)
is a {\it local minimizer} of \eqref{eq:energy} if there 
exists $\varepsilon>0$ such that
$$
E(u)\le E(u+\xi) 
$$
for every $C^1_0(\overline{\Omega})$ function $\xi$ with 
$\Vert\xi\Vert_{C^1(\overline{\Omega})}\leq \varepsilon$.
By elliptic regularity, every local minimizer $u$ is a $C^\infty$
classical solution of \eqref{problem}. In addition, it is a 
{\it semi-stable} solution in the following sense.

We say that a classical solution $u\in C^2(\overline{\Omega})$ of \eqref{problem}
is {\it semi-stable} if
\begin{equation}
Q_u(\xi ):=\int_{\Omega} \left|\nabla \xi \right|^{2}
-f'(u)\xi^{2}\ dx \geq 0
\label{eq:02}
\end{equation} 
for every $\xi\in C_0^1(\overline{\Omega})$. 
Note that $Q_u$ is the second variation of energy at $u$. The 
semi-stability of a solution $u$ is equivalent to the condition
$\lambda_1\geq0$, where 
$\lambda_1=\lambda_1\left(-\Delta-f'(u);\Omega\right)$ is the first
Dirichlet eigenvalue of the linearized operator $-\Delta-f'(u)$ at
$u$ in $\Omega$. We use the name {\it semi-stable} to distinguish 
from the notion of {\it stable} solution, defined by 
$\lambda_1\left(-\Delta-f'(u);\Omega\right)>0$.
Note that a local minimizer is always semi-stable, but not necessarily
a stable solution.

The following is our main estimate. It originates from questions
raised by H.~Brezis during the nineties (see \cite{B}) on certain 
extremal solutions described below.
In dimensions $n\leq 4$,
we bound the $L^{\infty}(\Omega)$ norm of every positive semi-stable solution $u$ 
by the $W^{1,4}$ norm of $u$ on the set $\left\{u<t\right\}$ 
---where $t$ can be chosen arbitrarily. 
The estimate holds in every smooth domain $\Omega$ (not necessarily convex)
and for every nonlinearity $f$ ---the estimate is indeed completely independent of~$f$.
The importance of the bound is that, by choosing $t$ small, 
$\left\{u<t\right\}$ becomes a 
small neighborhood of $\partial\Omega$, and thus the boundedness of $u$ in 
$\Omega$ is reduced to a question on the regularity of $u$ near 
$\partial\Omega$.

\begin{theorem}\label{Thm1}
Let $f$ be any $C^\infty$ function and
$\Omega\subset \mathbb{R}^n$ any $C^\infty$ bounded domain.
Assume that $2\leq n\leq 4$. 

Let $u\in C^1_0(\overline{\Omega})$, with $u>0$ in $\Omega$,
be a local minimizer of \eqref{eq:energy},
or more generally a classical semi-stable solution of \eqref{problem}.
Then, for every $t>0$,
\begin{equation}\label{est4}
 \Vert u\Vert_{L^\infty(\Omega)} \leq t + \frac{C}{t}|\Omega|^{(4-n)/(2n)}
\left( \int_{\{u <t\}} |\nabla u|^4 \ dx\right)^{1/2},
\end{equation}
where $C$ is a universal constant {\rm (}in particular, independent of 
$f$, $\Omega$,~and~$u${\rm)}. In the last integral we have used the 
notation $\{u <t\}=\{x\in\Omega\; : \; u(x) <t\}$.
\end{theorem}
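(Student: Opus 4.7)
The plan is to apply the semi-stability inequality $Q_u(\xi)\ge 0$ to the test function
\[
\xi := |\nabla u|\, \psi(u), \qquad \psi(s) := \min(s/t,\, 1),
\]
suitably mollified near the critical set of $u$. The choice $\psi(0)=0$ forces $\xi=0$ on $\partial\Omega$ (even though $|\nabla u|$ need not vanish there), while $\psi\equiv 1$ on $\{u\ge t\}$ cleanly isolates the boundary layer $\{u<t\}$ from the bulk. The crucial feature is that $|\nabla\xi|^2$ contains the term $\psi'(u)^2|\nabla u|^4$, producing precisely the $L^4$ gradient quantity that must appear on the right-hand side of~\eqref{est4}.

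To eliminate the $f'(u)$ term from $Q_u(\xi)$ --- necessary because the bound must be independent of $f$ --- I will use the Bochner-type identity
\[
\tfrac{1}{2}\Delta(|\nabla u|^2) \,=\, |\nabla^2 u|^2 \,-\, f'(u)\,|\nabla u|^2,
\]
obtained by differentiating $-\Delta u = f(u)$, followed by integration by parts (with $\psi(u)=0$ on $\partial\Omega$ to kill boundary terms). The cancellations produce the \emph{geometric} form of semi-stability
\[
\int_\Omega |\nabla u|^2\,|\nabla \psi(u)|^2\,dx \,\ge\, \int_\Omega \psi(u)^2\bigl(|\nabla^2 u|^2 - |\nabla|\nabla u||^2\bigr)\,dx,
\]
which, since $\psi(u)^2\ge\chi_{\{u>t\}}$ and $|\nabla\psi(u)|^2 = t^{-2}\chi_{\{u<t\}}|\nabla u|^2$, reduces to the key inequality
\[
\frac{1}{t^2}\int_{\{u<t\}}|\nabla u|^4\,dx \,\ge\, \int_{\{u>t\}}\bigl(|\nabla^2 u|^2 - |\nabla|\nabla u||^2\bigr)\,dx.
\]

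The remaining task is to bound $\|u\|_{L^\infty(\Omega)}-t$ from above by the right-hand side of this inequality. For this I will invoke the Sternberg--Zumbrun identity
\[
|\nabla^2 u|^2 - |\nabla|\nabla u||^2 \,=\, |\nabla u|^2\,|A_s|^2 + |\nabla_T|\nabla u||^2 \quad \text{on } \Sigma_s := \{u = s\},
\]
where $A_s$ is the second fundamental form of the level hypersurface. Combining the coarea formula with the Michael--Simon Sobolev inequality on $\Sigma_s$ (giving $\int_{\Sigma_s}|A_s|^2 \ge c\,|\Sigma_s|^{(n-3)/(n-1)}$) and the classical isoperimetric inequality in $\Omega$ (giving $|\Sigma_s|\ge c_n\,V(s)^{(n-1)/n}$ with $V(s)=|\{u>s\}|$), a Cauchy--Schwarz argument in the $s$-variable over $(t, \|u\|_{L^\infty})$ produces
\[
\bigl(\|u\|_{L^\infty(\Omega)}-t\bigr)^2 \,\le\, C\,|\Omega|^{(4-n)/n}\int_{\{u>t\}}\bigl(|\nabla^2 u|^2 - |\nabla|\nabla u||^2\bigr)\,dx,
\]
which, combined with the key inequality, gives \eqref{est4}. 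The main obstacle is the endpoint $n=4$: there the naive Cauchy--Schwarz bookkeeping leads to a borderline divergent integral $\int_0^{V(t)}\!dV/V$, and saving the argument at this threshold is exactly what forces the dimensional restriction $n\le 4$. Rescuing it presumably requires exploiting the otherwise-discarded tangential term $|\nabla_T|\nabla u||^2$, or a sharper Hölder-type estimate tuned to the critical exponent.
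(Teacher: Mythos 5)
Your setup coincides with the paper's: the test function $\xi=|\nabla u|\,\psi(u)$ in the stability inequality, the Sternberg--Zumbrun identity to cancel $f'(u)$, and your ``key inequality''
$t^{-2}\int_{\{u<t\}}|\nabla u|^4\,dx\ \ge\ \int_{\{u>t\}}\bigl(|\nabla^2u|^2-|\nabla|\nabla u||^2\bigr)\,dx$
is exactly \eqref{eq:224}. One bookkeeping point: after the coarea formula the curvature enters with the weight $|\nabla u|$, i.e.\ as $\int_{\Gamma_s}|A|^2|\nabla u|\,dV_s$, not as $\int_{\Gamma_s}|A|^2\,dV_s$; so the link to $|\Gamma_s|$ must go through Cauchy--Schwarz on $\Gamma_s$, $\int_{\Gamma_s}|H|\le\bigl(\int_{\Gamma_s}|A|^2|\nabla u|\bigr)^{1/2}\bigl(\int_{\Gamma_s}|\nabla u|^{-1}\bigr)^{1/2}$, whose second factor is $-V'(s)$. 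That is precisely what produces your borderline integral $\int_0^{V(t)}r^{-2(n-2)/n}\,dr$. For $n=2,3$ this chain closes and is literally the paper's argument.

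The gap is the case $n=4$, which is the entire novelty of the theorem (for the application, $n\le3$ was already known from Nedev). You correctly diagnose that your integral becomes $\int_0^{V(t)}dr/r$ at $n=4$, but you leave the rescue as a conjecture. The paper's fix is not a sharper H\"older estimate within the same scheme; it discards the fixed test function $\psi=\min(s/t,1)$ and the isoperimetric chain for $s>t$ altogether. First, it applies the Michael--Simon--Allard inequality on $\Gamma_s$ with $p=2$ and $v=|\nabla u|^{1/2}$ --- so the tangential term $|\nabla_T|\nabla u|^{1/2}|^2$ that you propose to discard is exactly what gets used --- yielding, for $n=4$, the pointwise-in-$s$ comparison $h_2(s)^{1/3}\le C\,h_1(s)$ between the two weights $h_1(s)=\int_{\Gamma_s}4|\nabla_T|\nabla u|^{1/2}|^2+|A|^2|\nabla u|\,dV_s$ and $h_2(s)=\int_{\Gamma_s}|\nabla u|^3\,dV_s$ of the one-dimensional Hardy-type inequality $\int_0^Th_1\varphi^2\,ds\le\int_0^Th_2(\varphi')^2\,ds$. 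Second, it feeds into that inequality the exponential test function $\varphi(s)=\exp\bigl(\tfrac{1}{\sqrt2}\int_t^s\sqrt{h_1/h_2}\,d\tau\bigr)$ for $s>t$ (with $h_1/h_2$ truncated at level $k$ to keep $\varphi$ Lipschitz), for which $h_2(\varphi')^2=\tfrac12h_1\varphi^2$, and then extracts $T-t\le C\,B_t^{1/2}$ by writing $T-t=\sup_k\int_t^T(h_2g_k/h_1)^{1/4}\,ds$ and applying Cauchy--Schwarz against $\sqrt{h_1}\,\varphi$. Your instinct about exploiting the tangential term is right, but without the weight comparison $h_2^{1/3}\le Ch_1$ and the adapted exponential test function the critical case does not close; as written, your argument proves the theorem only for $n\le3$.
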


We will be able to control the right hand side of \eqref{est4}, 
for $t$ small enough, if $\Omega$ is convex. The reason is 
that in convex domains, the moving planes method leads to boundary 
estimates for all positive solutions and all nonlinearities~$f$.

Note that \eqref{est4} is invariant under dilations of the
domain $\Omega$, and also allows to multiply $u$ by a constant 
(by chosing $t$ to have the same units as $u$).
 
The following is the main application of Theorem \ref{Thm1}. 
It motivated this work. Consider the problem
$$
\left\{ \begin{array}{rcll}
 -\Delta u  & = & \lambda g(u) & \textrm{in }\Omega\\
         u  & \ge & 0 & \textrm{in }\Omega\\
         u  & = & 0 & \textrm{on }\partial\Omega,
\end{array}\right.
\eqno{(\pbla)}\stepcounter{equation}
$$
where $\Omega\subset\mathbb{R}^n$ is a smooth bounded domain,
$n\geq 2$, $\lambda \ge 0$, and the nonlinearity 
$g:[0,+\infty)\to\mathbb{R}$ satisfies
\begin{equation}
g\mbox{ is } C^1, \text{nondecreasing, } 
g(0)>0, \textrm{ and }\lim _{u\to +\infty }\frac{g(u)}{u}=+\infty .
\label{nonlg}
\end{equation}
It is well known (see \cite{B,BV,C} and references therein) 
that there exists an
extremal parameter $\lambda^*\in (0,\infty)$ such that if $0\le\lambda<\lambda^*$ then
$(\pbla)$ admits a minimal classical solution $u_\lambda$. In addition,
this solution $u_{\lambda}$ is semi-stable ---see Remark \ref{rem-semi} 
below. On the other hand,
if $\lambda>\lambda^*$ then $(\pbla)$ has no classical solution.
Here, classical means bounded, while minimal means smallest. 
The set $\left\{ u_{\lambda }:0\le \lambda <\lambda ^{*}\right\}$ 
is increasing in $\lambda$ and its limit as 
$\lambda\nearrow\lambda^{*}$ is a weak solution $u^*=u_{\lambda^*}$ 
of $(\pblaest)$, called the extremal solution of $(\pbla)$.
Later we will give the precise meaning of weak solution.

When $g(u)=e^u$, it is known that $u^*\in L^\infty(\Omega)$ 
if $n\leq 9$ (for every~$\Omega$), 
while $u^{*}(x)=-2\log |x|$ if $n\ge 10$ and $\Omega=B_1$.
A similar phenomenon happens when $g(u)=(1+u)^p$ with $p>1$. 
These results date from the seventies. In the nineties important progress 
in the subject came from works of H.~Brezis and collaborators.
He raised the question (see Brezis \cite{B} and also 
Brezis and V\'azquez \cite{BV}) of determining
the regularity of $u^*$, depending
on the dimension~$n$, for general convex nonlinearities 
$g$ satisfying \eqref{nonlg}.
The best known result was established in 2000 by 
G.~Nedev~\cite{ND}. He proved that, for every domain $\Omega$ and convex 
nonlinearity~$g$ satisfying \eqref{nonlg}, 
$u^*\in L^\infty(\Omega)$ if $n\leq 3$, while
$u^*\in H^1_0(\Omega)$ if $n\leq 5$.

In this article, using Theorem \ref{Thm1} we establish the 
boundedness of $u^*$ in dimensions $n\leq 4$ when $\Omega$ is a 
convex domain. We only assume \eqref{nonlg} on the nonlinearity $g$.
In dimension 2, the convexity of $\Omega$ is not assumed.

\begin{theorem}\label{Thm2}
Let $g$ satisfy \eqref{nonlg} and $\Omega\subset\R^{n}$
be a $C^\infty$ bounded domain. Assume that $2\leq n\leq 4$,
and that $\Omega$ is convex in case $n\in \{3,4\}$. 

Let $u^*$ be the extremal solution of $(\pbla)$. Then,
$u^* \in L^\infty(\Omega)$.
\end{theorem}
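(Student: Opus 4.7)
My plan is to apply Theorem~\ref{Thm1} to the increasing family of minimal solutions $u_\lambda$, $\lambda\in(0,\lambda^*)$, obtain from it an $L^\infty$ bound uniform in $\lambda$, and pass to the monotone limit $u^*=\lim_{\lambda\nearrow\lambda^*}u_\lambda$. Each $u_\lambda$ is positive, classical, and semi-stable, so Theorem~\ref{Thm1} yields, for every $t>0$,
\[
\|u_\lambda\|_{L^\infty(\Omega)}\le t+\frac{C}{t}|\Omega|^{(4-n)/(2n)}\Bigl(\int_{\{u_\lambda<t\}}|\nabla u_\lambda|^{4}\,dx\Bigr)^{1/2}.
\]
The task is therefore to bound the gradient integral on the right-hand side uniformly in $\lambda$ for some conveniently chosen~$t$.

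First I would localize $\{u_\lambda<t\}$ near $\partial\Omega$. Fix $\lambda_1\in(0,\lambda^*)$ and restrict attention to $\lambda\in[\lambda_1,\lambda^*)$. Since $\lambda\mapsto u_\lambda$ is nondecreasing and $u_{\lambda_1}>0$ in $\Omega$, each compact $K\Subset\Omega$ admits $c(K)>0$ with $u_\lambda\ge u_{\lambda_1}\ge c(K)$ on $K$. Taking $t=t_0$ small enough, depending only on $\Omega$, $g$, and $\lambda_1$, the set $\{u_\lambda<t_0\}$ sits, uniformly in $\lambda$, inside a fixed boundary strip $\Omega_\delta:=\{\operatorname{dist}(\cdot,\partial\Omega)<\delta\}$.

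The crux is then to prove an estimate of the form $\int_{\Omega_\delta}|\nabla u_\lambda|^{4}\,dx\le C$ with $C$ independent of $\lambda$, and this is where the convexity of $\Omega$ for $n\in\{3,4\}$ comes in. I would invoke the moving planes method, which applies to every positive classical solution of $-\Delta u=f(u)$ in $\Omega$ with $u|_{\partial\Omega}=0$ and $f\ge0$, and whose conclusions depend only on the geometry of $\partial\Omega$: for a $\delta>0$ determined by $\Omega$ alone, $u_\lambda$ is monotone along suitable direction fields inside $\Omega_\delta$. Combined with the uniform $L^1$ bound $\|u_\lambda\|_{L^1(\Omega)}\le\|u^*\|_{L^1(\Omega)}<\infty$ coming from the $L^1$ theory of weak extremal solutions, and with standard boundary regularity for $-\Delta$, this monotonicity should translate into the desired $L^4$ gradient bound in $\Omega_\delta$, with a constant depending only on $\Omega$ and $\|u^*\|_{L^1(\Omega)}$; in particular, independent of $g$ and of $\lambda$. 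The two-dimensional case does not require convexity and can be handled by more classical means, using uniform $H^1$ bounds and planar Sobolev embeddings.

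The main obstacle I anticipate is exactly this $f$-independent boundary gradient estimate: since $g$ may grow supercritically in the Sobolev sense, subcritical a priori bounds (of de~Figueiredo--Lions--Nussbaum type) are unavailable, and one is forced to exploit the purely geometric information provided by moving planes to produce an estimate that passes to the limit $\lambda\nearrow\lambda^*$. Once this is in hand, inserting it into the Theorem~\ref{Thm1} inequality with $t=t_0$ fixed gives $\|u_\lambda\|_{L^\infty(\Omega)}\le C$ uniformly in $\lambda$, and the monotone convergence $u_\lambda\nearrow u^*$ yields $u^*\in L^\infty(\Omega)$.
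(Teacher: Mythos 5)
Your overall strategy coincides with the paper's: apply Theorem~\ref{Thm1} to the semi-stable minimal solutions $u_\lambda$, reduce the problem to a uniform bound on $\int_{\{u_\lambda<t_0\}}|\nabla u_\lambda|^4\,dx$ for a fixed small $t_0$ that confines $\{u_\lambda<t_0\}$ to a boundary strip, obtain that bound from the moving planes method in the convex domain together with the uniform control $\Vert u_\lambda\Vert_{L^1(\Omega)}\le\Vert u^*\Vert_{L^1(\Omega)}$, and pass to the monotone limit. The localization step via $u_\lambda\ge u_{\lambda_1}>0$ on compacts is fine. The gap sits exactly where you flag "the main obstacle": you never say how moving planes yields the $W^{1,4}$ bound in the strip, and the mechanism you gesture at --- converting the directional monotonicity itself into a gradient estimate --- is not how it works. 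The correct chain is: (i) moving planes gives, for each $x$ with $\text{\rm dist}(x,\partial\Omega)<\rho$, a truncated cone $I_x$ with $|I_x|\ge\gamma>0$ ($\rho,\gamma$ depending only on $\Omega$) on which $u_\lambda\ge u_\lambda(x)$; integrating over $I_x$ gives the \emph{pointwise} bound $\Vert u_\lambda\Vert_{L^\infty(\Omega_\rho)}\le\gamma^{-1}\Vert u_\lambda\Vert_{L^1(\Omega)}\le\gamma^{-1}\Vert u^*\Vert_{L^1(\Omega)}=:c_4$ --- an $L^\infty$ bound near the boundary, not a gradient bound; (ii) this controls the right-hand side of the equation in the strip, $\Vert\lambda g(u_\lambda)\Vert_{L^\infty(\Omega_\rho)}\le\lambda^*\sup_{[0,c_4]}g$; (iii) interior and boundary elliptic estimates for the Poisson equation then give $\Vert u_\lambda\Vert_{W^{1,4}(\Omega_{\rho/2})}\le C$ uniformly in $\lambda$. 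Note the resulting constant is \emph{not} independent of $g$, contrary to your claim: it depends on $\sup_{[0,c_4]}g$ (harmless, since $g$ is fixed, but it shows the estimate cannot be purely geometric). Without (i)--(iii) the proof is incomplete at its only nontrivial point beyond Theorem~\ref{Thm1}.

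Two smaller problems. For $n=2$ in a nonconvex domain, "uniform $H^1$ bounds and planar Sobolev embeddings" cannot yield $L^\infty$: in two dimensions $H^1$ embeds into every $L^p$ with $p<\infty$ and into the exponential class, but not into $L^\infty$. The paper instead invokes the Chen--Li boundary estimate (Kelvin transform plus moving planes, valid for $f\ge0$), which gives \eqref{lix} without convexity, and then runs the same argument. Finally, \eqref{nonlg} only makes $g$ a $C^1$ function while Theorem~\ref{Thm1} is stated for $C^\infty$ nonlinearities, so strictly speaking one must mollify $g$ by nondecreasing $C^\infty$ approximations $g_k\le g$, check that the corresponding extremal parameters satisfy $\lambda_k^*\ge\lambda^*$, and bound the approximating classical solutions uniformly, as the paper does.
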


The validity of this result in nonconvex domains is an open question.
 
The boundedness of $u^{*}$ in dimensions $5\leq n\leq 9$ 
remains an open problem. 
However, in the radial case
$\Omega=B_1$, Capella and the author \cite{CC} have 
established that $u^{*}\in L^{\infty}(B_1)$ whenever $n\leq9$, 
for all nonlinearities $g$ satisfying \eqref{nonlg}. See 
also \cite{CC} for precise pointwise bounds for $u^{*}$ 
in the radial case, and Villegas \cite{Vi} for 
improvements of some of them. These pointwise bounds hold 
not only for extremal solutions $u^{*}$ but for all 
semi-stable solutions.

For $n \in \left\{2,3\right\}$ and general $\Omega$, Nedev 
\cite{ND} assumed $g$ to be convex (a classical hypothesis in the
literature of the subject) and to satisfy \eqref{nonlg}. 
Our result holds also for nonconvex $g$, but when $n=3$ we 
need to assume $\Omega$ to be convex.

In \cite{CS}, Sanch{\'o}n and the author use some ideas of 
the present paper to establish new 
$L^{q}\left(\Omega\right)$ estimates for semi-stable and 
extremal solutions of $-\Delta u=f(u)$ in dimensions 
$n\geq 5$ for general nonlinearities $f$, as well as
related bounds for semilinear equations
involving the fractions of the Laplacian.

The articles \cite{CCS,DDM,S} contain extensions of the 
previous results on extremal solutions for more general 
quasilinear problems and problems related to nonlinear 
Neumann boundary conditions. 

Theorem \ref{Thm2} on the boundedness of $u^{*}$ will follow 
easily from our main estimate \eqref{est4} of Theorem 
\ref{Thm1}. The key point is that the minimal solutions 
$u_{\lambda}$ of $(\pbla)$ are all 
semi-stable (see next remark). The boundary estimate needed 
to control the right hand side of \eqref{est4} is known to 
hold in convex domains ---it follows from the moving planes 
method. Thus, we only assume the convexity of $\Omega$ to 
guarantee boundary estimates for the solution.

\begin{remark}\label{rem-semi}
{\rm
That the minimal solutions $u_{\lambda}$ of 
$(\pbla)$ are semi-stable can be easily 
seen as follows. Since there is no solution smaller than 
$u_{\lambda}$, we have that $u_{\lambda}$ must coincide with 
the absolute minimizer of the energy in the close convex 
set of functions lying between 0 (a strict subsolution) and 
$u_{\lambda}$, a (super) solution. That is, $u_{\lambda}$ 
is a one-sided minimizer (by below). Now, consider small 
perturbations $u_{\lambda}+\varepsilon\xi$, with $\xi\leq0$, 
lying in this convex set. Since $u_{\lambda}$ is a critical 
point of the energy and minimizes it for these perturbations, 
we deduce that the second variation 
$Q_{u_{\lambda}}\left(\xi\right)\geq0$ for all 
$\xi\in C_{0}^{1}\left(\overline{\Omega}\right)$ with $\xi\leq0$ 
in $\Omega$. Now, writting any $\xi$ as 
$\xi=\xi^{+}-\xi^{-}$ (its positive and negative parts) and 
using the form of $Q_{u_{\lambda}}\left(\xi\right)$ and 
that $\xi^{+}$ and $\xi^{-}$ have disjoint supports, 
we conclude $Q_{u_{\lambda}}\left(\xi\right)\geq0$ for all 
$\xi\in C_{0}^{1}\left(\overline{\Omega}\right)$, as claimed. 

Let us mention that the solutions 
$\left\{u_{\lambda}:0\leq\lambda\leq\lambda^{*}\right\}$ 
can also be obtained using the implicit function theorem 
(starting from $\lambda=0$) when $g$ is convex (see 
\cite{BV}). This gives an alternative way of proving 
the semi-stability of $u_{\lambda}$. Another possibility 
is to obtain $u_{\lambda}$ by the monotone iteration 
method, with $0<\lambda<\lambda^{*}$ fixed, starting 
from the strict subsolution $u\equiv0$. This is the reason 
why $u_{\lambda}$ is the minimal (or smallest) solution. 
In addition, its semi-stability can also be proved via this 
monotone iteration procedure. 

See \cite{Da}, and references therein, for more 
information on minimal and extremal solutions, also for 
more general operators.
}
\end{remark}

The following result states a more explicit 
$L^{\infty}\left(\Omega\right)$ estimate for semi-stable 
solutions. The estimate depends on certain assumptions on the 
nonlinearity which are more general 
than \eqref{nonlg}. It applies to certain weak solutions.
We recall that $u\in L^1(\Omega)$ is said to be a weak solution of
\eqref{problem} if $f(u)\text{dist}(\cdot,\partial\Omega)\in L^1(\Omega)$
and $\int_\Omega u\Delta\xi+f(u)\xi\ dx =0$ for all $\xi\in C^2(\overline\Omega)$
with $\xi\equiv 0$ on $\partial\Omega$.
Theorem~\ref{Thm2} on extremal solutions will be an easy consequence
of the next result.

\begin{theorem}\label{Thm3}
Let $f$ be any $C^\infty$ function and 
$\Omega\subset \mathbb{R}^n$ any $C^\infty$ bounded domain.
Assume that $2\leq n\leq 4$,
and that $\Omega$ is convex in case $n\in \{3,4\}$. 

Let $u\in L^{1}(\Omega)$ be a positive weak solution of 
\eqref{problem} and suppose that $u$ is the $L^1(\Omega)$ 
limit of a sequence of classical positive
semi-stable solutions of \eqref{problem}. We then have:

{\rm (i)} If $f\geq 0$ in $[0,\infty)$, then $u \in L^\infty(\Omega)$.

{\rm (ii)} Assume that
\begin{equation}\label{assf}
f(s)\geq c_1>0  \text{ and } f(s)\geq \mu s-c_2 \quad
\text{ for all } s\in  [0,\infty),
\end{equation}
for some positive constants $c_1$ and $c_2$ and for some $\mu >
\lambda_1(\Omega)$, where $\lambda_1(\Omega)$ is the first Dirichlet
eigenvalue of $-\Delta$ in $\Omega$.
Then,
\begin{equation}\label{linf}
 \Vert u\Vert_{L^\infty(\Omega)} \leq C\left( \Omega,\mu,
c_1,c_2,\Vert f\Vert_{L^\infty(\,[0,\overline C(\Omega,\mu,c_2)]\,)} \right),
\end{equation}
where $C(\cdot)$ and $\overline C(\cdot)$ are constants depending only on the
quantities within the parentheses.
\end{theorem}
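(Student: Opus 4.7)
The plan is to apply the main estimate \eqref{est4} of Theorem~\ref{Thm1} to each element of the approximating sequence $\{u_k\}$ of classical positive semi-stable solutions, to bound the right-hand side of \eqref{est4} uniformly in $k$ for a well-chosen small $t>0$, and then to let $k\to\infty$. The uniform bound on the right-hand side will come from combining a uniform $L^1$-type control on $u_k$ with an $L^\infty$-estimate for $u_k$ on a boundary strip provided by the moving planes method in the convex case.

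I would first establish a uniform integral bound. In case (ii), testing $-\Delta u_k=f(u_k)$ against the positive first Dirichlet eigenfunction $\varphi_1$ of $-\Delta$ in $\Omega$ and using $f(s)\ge \mu s-c_2$ with $\mu>\lambda_1(\Omega)$ gives
$$
(\mu-\lambda_1)\int_\Omega u_k\,\varphi_1\,dx\ \le\ c_2\int_\Omega\varphi_1\,dx,
$$
so, since $\varphi_1$ is comparable to $\mathrm{dist}(\cdot,\partial\Omega)$, $\|u_k\|_{L^1(\Omega)}$ is bounded by a constant depending only on $\Omega$, $\mu$, and $c_2$. In case (i), the hypothesis $u_k\to u$ in $L^1(\Omega)$ directly yields $\sup_k\|u_k\|_{L^1(\Omega)}<\infty$.

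Next, when $n\in\{3,4\}$ and $\Omega$ is convex, I would use the classical moving planes method of Gidas--Ni--Nirenberg to produce a width $\gamma=\gamma(\Omega)>0$ and, at each point of $\partial\Omega$, a cone of inward directions along which every classical positive solution of \eqref{problem} is monotone non-decreasing throughout the boundary strip $\Omega_\gamma:=\{\mathrm{dist}(\cdot,\partial\Omega)<\gamma\}$. Averaging this monotonicity over the admissible cone (in the spirit of de~Figueiredo--Lions--Nussbaum) converts the previous $L^1$ bound into a uniform pointwise estimate
$$
\sup_{\Omega_{\gamma/2}} u_k\ \le\ C(\Omega)\,\|u_k\|_{L^1(\Omega)}\ =: M.
$$
The $n=2$ case is handled separately by a direct 2D argument not requiring convexity. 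In case (ii), the bound $f\ge c_1>0$ yields, by comparison with the torsion-type problem $-\Delta V=c_1$ in $\Omega$, $V|_{\partial\Omega}=0$, the lower bound $u_k\ge c_1 V\ge\alpha(\Omega,c_1)>0$ on the inner set $\Omega\setminus\Omega_{\gamma/2}$. Choosing any $t\in(0,\min\{M,\alpha\})$ forces $\{u_k<t\}\subset\Omega_{\gamma/2}$, so $u_k\le t$ there and $|f(u_k)|\le\|f\|_{L^\infty([0,t])}$; standard interior and boundary $W^{2,p}$ estimates for $-\Delta u_k=f(u_k)$ with zero Dirichlet data on $\partial\Omega$ then give
$$
\int_{\{u_k<t\}}|\nabla u_k|^4\,dx\ \le\ C\big(\Omega,t,\|f\|_{L^\infty([0,M])}\big)
$$
uniformly in $k$.

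Plugging this into \eqref{est4} produces a uniform $L^\infty(\Omega)$ bound for $\{u_k\}$, which passes to the $L^1$ limit $u$ and, by tracking constants, yields the explicit estimate \eqref{linf} of (ii), with $\overline C(\Omega,\mu,c_2)$ identified with $M$. Case (i) is obtained by the same scheme once the interior lower bound on $u_k$ used above is replaced by an argument coming from $f\ge 0$ together with the nontriviality of the limit: since $u_k\to u$ a.e.\ along a subsequence and $u>0$ in $\Omega$, a positive lower bound for $u_k$ on $\Omega\setminus\Omega_{\gamma/2}$ holds for all large $k$, and the remaining finitely many indices are handled by applying elliptic regularity individually. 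The main technical obstacle is the boundary pointwise estimate producing $M$: converting the moving-planes monotonicity into a bound depending only on $\|u_k\|_{L^1(\Omega)}$ and on the geometry of $\Omega$ is precisely the step that forces the convexity hypothesis in dimensions $n\in\{3,4\}$, and that leaves the nonconvex case open.
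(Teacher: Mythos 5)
Your overall scheme is the same as the paper's: apply Theorem~\ref{Thm1} to each $u_k$, force $\{u_k<t\}$ into a boundary strip by an interior lower bound, control $u_k$ on that strip via the moving planes estimate of Proposition~\ref{Prop4} combined with a uniform $L^1$ bound, and close with interior and boundary elliptic estimates. Part (ii) goes through essentially as you describe, with one small imprecision: testing against $\varphi_1$ bounds the weighted norm $\int_\Omega u_k\,\mathrm{dist}(\cdot,\partial\Omega)\,dx$, not $\Vert u_k\Vert_{L^1(\Omega)}$ directly (the weight vanishes at the boundary). The paper upgrades this by also testing against the torsion function $w$ solving $-\Delta w=1$, $w|_{\partial\Omega}=0$, which gives $\Vert u_k\Vert_{L^1(\Omega)}=\int_\Omega f(u_k)\,w\,dx\le C\Vert f(u_k)\Vert_{L^1_\delta(\Omega)}$, and the latter is comparable to $\int_\Omega u_k\varphi_1\,dx$ by the eigenfunction identity.

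The genuine gap is in part (i), where you claim that $u_k\to u$ a.e.\ along a subsequence together with $u>0$ yields a positive lower bound for $u_k$ on $\Omega\setminus\Omega_{\gamma/2}$ for all large $k$. Pointwise a.e.\ convergence to a positive limit does not give a uniform positive lower bound on a compact set: smooth positive functions can converge in $L^1$ and a.e.\ to a positive function while each of them dips to zero on a small wandering set. Without a uniform lower bound, no fixed $t>0$ guarantees $\{u_k<t\}\subset\Omega_{\gamma/2}$, and the argument for part (i) collapses. What rescues the situation is precisely the hypothesis $f\ge0$: since $-\Delta u_k=f(u_k)\ge0$, the Green function lower bound $G(x,y)\ge c\,\delta(x)\delta(y)$ gives $u_k\ge c\,\delta\,\Vert f(u_k)\Vert_{L^1_\delta(\Omega)}$ in all of $\Omega$ (this is the paper's inequality \eqref{lower}, Lemma~3.2 of \cite{BC}), and $\Vert f(u_k)\Vert_{L^1_\delta(\Omega)}$ is comparable to $\Vert u_k\Vert_{L^1(\Omega)}\to\Vert u\Vert_{L^1(\Omega)}>0$, hence bounded below uniformly in $k$. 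This quantitative lower bound (or an equivalent weak Harnack argument for the nonnegative superharmonic functions $u_k$) is the missing ingredient; the qualitative a.e.-convergence argument you propose is not sufficient as stated.
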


The proof of our main estimate \eqref{est4} starts by writting 
the semi-stability condition \eqref{eq:02} of the solution with the test 
function $\xi$ replaced by $\xi=c\eta$, where 
$\eta\in C_{0}^{1}\left(\overline{\Omega}\right)$ is still arbitrary and $c$ 
is a well chosen function satisfying a certain equation for the 
linearized operator $-\Delta-f'\left(u\right)$. This idea was already 
used to study minimal surfaces (more precisely, minimal cones), where 
one takes $c=\left|A\right|$ (the norm of the second fundamental 
form of the cone). This motivated our study of the radial semilinear case 
\cite{CC} in which we took $c=\partial_r u$, the radial derivative of 
$u$. See \cite{CC2} for more comments on minimal cones and also on 
similar ideas for harmonic maps. 

In the present paper we take $c=\left|\nabla u\right|$, which satisfies
\begin{equation}\label{eq:grad}
\left(\Delta+f'\left(u\right)\right)\left|\nabla u\right|=
\frac{1}{\left|\nabla u\right|} 
\left(\left|\nabla_T\left|\nabla u\right|\right|^2+
\left|A\right|^2\left|\nabla u\right|^2\right)\ \text{in}\ 
\Omega\cap\left\{\left|\nabla u\right|>0\right\}.
\end{equation}
Here $\left|A\right|^2=\left|A\left(x\right)\right|^2$ is the squared norm of 
the second fundamental form of the level set of $u$ passing through a 
given $x\in\Omega\cap\left\{\left|\nabla u\right|>0\right\}$, i.e., the 
sum of the squares of the principal curvatures of such level set. On the 
other hand, $\nabla_T$ denotes the tangential gradient to the level set. 
Thus, \eqref{eq:grad} contains geometrical information of the level sets 
of $u$. This geometrical quantities will appear in the expression for 
the second variation of energy $Q_u\left(\xi\right)$ once we take 
$\xi=\left|\nabla u\right|\eta$ and, at the same time, the presence of 
$f'\left(u\right)$ in \eqref{eq:02} will disappear since the left hand 
side of \eqref{eq:grad} refers to $\Delta+f'\left(u\right)$.

Next, we will take $\eta=\varphi\left(u\right)$, i.e.,
$$\xi=\left|\nabla u\right|\varphi\left(u\right),$$
and we will be lead to a Hardy type inequality (for functions $\varphi$ 
of one real variable) involving two weights containing geometrical 
information of the level sets of $u$ ---see inequality \eqref{semi3}. 

The next crucial point in dimension $n=4$ will be the use of a 
remarkable Sobolev inequality on general hypersurfaces of $\mathbb{R}^n$ 
(in our case the level sets of $u$), which involves the mean curvature 
of the hypersurface but that has a best constant independent of the 
hypersurface ---in fact, depending only on the dimension $n$. This 
Sobolev inequality (Theorem \ref{ThmMS} below) is due to Michel-Simon 
\cite{MS} and Allard \cite{A}. 

As we mention in next section, the use of \eqref{eq:grad} and of 
$\xi=\left|\nabla u\right|\eta\left(u\right)$ in the semi-stability 
condition \eqref{eq:02} was first exploited by Sternberg and Zumbrun 
\cite{SZ1,SZ2} to study semilinear phase transitions problems. Farina 
\cite{Fa}, and later Farina-Sciunzi-Valdinoci \cite{FSV} for more 
general quasi-linear operators, have also used this method to establish 
some Liouville type results. 

The results of this paper were first announced in March 2006 in a seminar 
I gave at  ``Analyse Non Lin\'eaire'' (Laboratoire J.L. Lions, Paris VI).

In next section we prove our main estimate, Theorem \ref{Thm1}.
In section~3 we establish Theorem \ref{Thm3} and, as a 
simple consequence, Theorem \ref{Thm2}.

\section{Proof of the main estimate}

In this section we prove our main estimate ---estimate \eqref{est4} of 
Theorem \ref{Thm1}. For this, we will use the following remarkable 
result.

It is a Sobolev inequality due to Michael and Simon~\cite{MS}
and Allard~\cite{A}. It holds on every compact hypersurface of 
$\mathbb{R}^{m+1}$ without boundary, and its constant is independent 
of the geometry of the hypersurface.

\begin{theorem}[\textbf{Michael-Simon~\cite{MS}, Allard~\cite{A}}]
\label{ThmMS}
Let  $M\subset  \R^{m+1}$ be a $C^\infty$ immersed $m$-dimensional 
compact hypersurface without boundary.

Then, for every $p\in  [1, m)$,
there exists a constant $C = C(m,p)$ depending only on the dimension $m$ 
and the exponent $p$ such that, for every $C^\infty$ function $v : M  \to \R$, 
\begin{equation}\label{MSsob}
\left( \int_M |v|^{p^*} dV \right)^{1/p^*} \leq C(m,p)
\left( \int_ M |\nabla v|^p +  |Hv|^p \ dV \right)^{1/p},
\end{equation}
where $H$ is the mean curvature of $M$ and $p^* = mp/(m - p)$.
\end{theorem}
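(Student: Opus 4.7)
The plan is to prove the inequality first in the base case $p=1$, where $p^{*}=m/(m-1)$, and then to bootstrap to general $p\in[1,m)$ by applying the $p=1$ bound to $w=v^{\alpha}$ with $\alpha=p(m-1)/(m-p)$ and invoking H\"older's inequality to absorb a factor of $\bigl(\int v^{p^{*}}\bigr)^{(p-1)/p}$ on the right-hand side (the algebra forces precisely the exponents $p$ on $|\nabla v|$ and $|Hv|$).

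For the $p=1$ case, the strategy is to reduce the Sobolev bound to an isoperimetric-type inequality on $M$: for every compact smooth domain $E\subset M$,
\[
\mathcal{H}^{m}(E)^{(m-1)/m}\leq C(m)\Bigl(\mathcal{H}^{m-1}(\partial E)+\int_{E}|H|\, dV\Bigr).
\]
The reduction is standard: Minkowski's integral inequality gives $\|v\|_{L^{m/(m-1)}}\leq\int_{0}^{\infty}\mathcal{H}^{m}(\{v>t\})^{(m-1)/m}\, dt$, and applying the isoperimetric inequality on each super-level set, combined with the coarea formula on $M$ and Fubini, converts the right-hand side into $C\bigl(\int_{M}|\nabla v|\, dV+\int_{M}|H|v\, dV\bigr)$.

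The remaining task is to prove the isoperimetric inequality. I would use the first variation formula on $M$,
\[
\int_{M}\mathrm{div}_{M}X\, dV=-\int_{M}\vec{H}\cdot X\, dV,
\]
applied to truncated radial vector fields of the form $X(y)=\varphi(|y-x_{0}|/r)(y-x_{0})|y-x_{0}|^{-m}$, to derive an almost-monotonicity formula for the mass ratio $r\mapsto r^{-m}\mathcal{H}^{m}(M\cap B_{r}(x_{0}))$, whose deviation from monotonicity is quantitatively controlled by an integral of $|H|$. The monotonicity formula then yields a uniform lower density bound $\mathcal{H}^{m}(M\cap B_{r}(x_{0}))\geq c(m)r^{m}$ at every $x_{0}\in M$ for every $r$ below a scale determined by $\int_{E}|H|$. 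A Besicovitch (or Vitali) covering argument applied to $\partial E$ then converts this pointwise density estimate into the desired isoperimetric bound.

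The main obstacle is making the monotonicity formula sufficiently quantitative that the final Sobolev constant depends only on $m$ and $p$, and not on any intrinsic feature of $M$. Since $M$ is an arbitrary immersed hypersurface with no intrinsic curvature control, every piece of geometric information must be extracted extrinsically, through the first variation and the mean curvature alone; carefully tracking the error terms so that they propagate correctly through the monotonicity, the covering, and the substitution reducing general $p$ to $p=1$ is the delicate step, and is what makes the Michael--Simon--Allard inequality considerably more subtle than the Euclidean Sobolev inequality it mimics.
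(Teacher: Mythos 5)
A point of reference first: the paper does not prove Theorem \ref{ThmMS} at all. It is quoted as a known result of Michael--Simon and Allard, with pointers to Mantegazza (Proposition 5.2 of \cite{M}) and to Section 28.5.2 of \cite{BZ} for proofs. So there is no proof in the paper to compare against; your outline has to be measured against the classical argument in those references, and in architecture it matches it. The bootstrap from $p=1$ to general $p\in[1,m)$ via $w=|v|^{\alpha}$ with $\alpha=p(m-1)/(m-p)$ and H\"older is correct: the arithmetic works out, since $(\alpha-1)\frac{p}{p-1}=p^{*}$, the leftover power of $\|v\|_{L^{p^{*}}}$ after dividing is exactly $1$, and $\alpha\geq1$ makes $|v|^{\alpha}$ admissible. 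The layer-cake/coarea reduction of the $p=1$ case to the isoperimetric inequality with mean-curvature term is also the standard route; that isoperimetric inequality is precisely Theorem 28.4.1 of \cite{BZ}, which this paper invokes separately as its inequality \eqref{eq:225}.

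The place where your write-up is a plan rather than a proof is the isoperimetric inequality itself, and that is where essentially all the content of the theorem lives. Two concrete remarks. First, for a region $E\subset M$ with boundary the relevant identity is the first variation \emph{with boundary term}, $\int_{E}\mathrm{div}_{M}X\,dV=-\int_{E}\vec{H}\cdot X\,dV+\int_{\partial E}X\cdot\nu\,d\mathcal{H}^{m-1}$; it is this boundary term that produces $\mathcal{H}^{m-1}(\partial E)$ on the right-hand side of the isoperimetric inequality, and the closed-manifold version you display omits it. Second, the almost-monotonicity of $r^{-m}\mathcal{H}^{m}(E\cap B_{r}(x_{0}))$ does not yield a clean density lower bound at every scale; what one actually extracts (as in \cite{MS} or \cite{BZ}) is a dichotomy at each point of $E$ --- either the mass ratio is bounded below at radius $r$, or $B_{r}(x_{0})$ already captures a definite fraction of $\mathcal{H}^{m-1}(\partial E)+\int_{E}|H|\,dV$ --- and the covering argument must be run on this dichotomy, selecting for each point the first bad radius. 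Making this quantitative with constants depending only on $m$ is exactly the delicate step you flag, but as written it remains to be done. In summary: correct strategy, consistent with the cited sources, incomplete at its analytic core.
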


This inequality is stated in Proposition 5.2 of \cite{M}, where references for 
it and related results are mentioned. In \cite{BZ} (section 28.5.2) it is stated
and proved for $p=1$.

The geometric Sobolev inequality \eqref{MSsob} 
has been used in the PDE literature to obtain estimates for the extinction time 
of some geometric evolution flows ---see, for instance, section F.2 of \cite{E} 
and also \cite{M}.

In the proof of our main estimate in Theorem \ref{Thm1} we will use 
\eqref{MSsob} with $M=\{u=s\}$ (a level set of $u$), 
$v=|\nabla u|^{1/2}$, and $p=2$. 

The level sets of a solution $u$, and their curvature, appear in the following 
result of Sternberg and Zumbrun \cite{SZ1,SZ2}. Its statement is an inequality 
which follows from the semi-stability hypothesis \eqref{eq:02} on the solution.

\begin{proposition}[\textbf{Sternberg-Zumbrun} \cite{SZ1,SZ2}]
\label{Prop-semi-st}
Let $\Omega\subset\R^n$ be a smooth bounded domain and $u$ a smooth positive 
semi-stable solution of \eqref{problem}. Then, for every Lipschitz function 
$\eta$ in $\overline\Omega$ with $\eta|_{\partial\Omega}\equiv0$,
\begin{equation}\label{semi1}
\int_{\Omega\cap\left\{\left|\nabla u\right|>0\right\}} 
\left( |\nabla_T |\nabla u||^2 +|A|^2|\nabla u|^2\right)\eta^2\ dx
\leq \int_\Omega |\nabla u|^2 |\nabla \eta|^2 \ dx, 
\end{equation}
where $\nabla_T$ denotes the tangential or Riemannian gradient along a 
level set of $u$ {\rm (}it is thus the orthogonal projection of the full 
gradient in $\R^n$ along a level set of $u${\rm)} and where
$$
|A|^2=|A(x)|^2=\sum_{l=1}^{n-1} \kappa_l^2,
$$
being $\kappa_l$ the principal curvatures of the level set of $u$ passing
through $x$, for a given $x\in\Omega\cap\left\{\left|\nabla u\right|>0\right\}$.
\end{proposition}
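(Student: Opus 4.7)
The plan is to insert $\xi = c\eta$, with $c := |\nabla u|$, into the semi-stability inequality \eqref{eq:02} and rewrite the result by integration by parts in order to bring out a cancellation of $f'(u)$. Writing $|\nabla(c\eta)|^2 = \eta^2|\nabla c|^2 + 2c\eta\,\nabla c\cdot\nabla\eta + c^2|\nabla\eta|^2$, integrating the middle term by parts (legitimate since $\eta$ vanishes on $\partial\Omega$), and regrouping, one formally arrives at
$$Q_u(c\eta) = \int_\Omega c^2|\nabla\eta|^2\,dx - \int_\Omega \eta^2\,\bigl[c\,(\Delta + f'(u))c\bigr]\,dx,$$
in which $f'(u)$ appears only inside the bracket. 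Substituting the pointwise identity
$$c\,(\Delta + f'(u))c = |\nabla_T c|^2 + |A|^2 c^2 \quad\text{on } \{c>0\}$$
then turns $Q_u(c\eta)\geq 0$ into precisely \eqref{semi1}.

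To establish the pointwise identity, I would differentiate the equation $-\Delta u = f(u)$ componentwise, obtaining $-\Delta\partial_k u = f'(u)\partial_k u$. Multiplying by $\partial_k u$, summing over $k$, and comparing with the Bochner formula $\tfrac12\Delta|\nabla u|^2 = |D^2u|^2 + \sum_k \partial_k u\,\Delta\partial_k u$ yields $c\Delta c + |\nabla c|^2 = |D^2u|^2 - f'(u)c^2$ on $\{c>0\}$. At such a point I would choose an orthonormal basis in which $\nu = \nabla u/c$ is the unit normal to the level set $\{u=u(x)\}$ and $e_1,\dots,e_{n-1}$ span its tangent space; a direct calculation shows that the tangential-tangential block of $D^2u$ equals $-cA$, the tangential-normal entries give $\nabla_T c$, and the normal-normal entry is $\partial_\nu c$. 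Hence $|D^2u|^2 = |A|^2 c^2 + 2|\nabla_T c|^2 + (\partial_\nu c)^2$ while $|\nabla c|^2 = |\nabla_T c|^2 + (\partial_\nu c)^2$; subtracting and combining with the Bochner-type equation above yields the desired identity.

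The main technical obstacle is that $c$ is only Lipschitz on $\Omega$ and smooth only on the open set $\{c>0\}$, so the integration by parts and the pointwise identity above are not directly applicable where $\nabla u$ vanishes. To rigorize, I would replace $\eta$ by $\eta_\delta := \eta\,\phi_\delta(c)$, where $\phi_\delta$ is a Lipschitz cutoff with $\phi_\delta \equiv 0$ on $[0,\delta/2]$, $\phi_\delta \equiv 1$ on $[\delta,\infty)$, and $|\phi_\delta'|\leq 3/\delta$. Then $\xi = c\eta_\delta$ is Lipschitz with support in $\{c\geq\delta/2\}$, so \eqref{eq:02} (extended by density from $C^1_0$ to Lipschitz test functions vanishing on $\partial\Omega$) applies, every computation is legitimate on the support, and \eqref{semi1} holds with $\eta_\delta$ in place of $\eta$. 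Letting $\delta\to0$, the left hand side converges by monotone convergence since its integrand is nonnegative and already restricted to $\{c>0\}$; the error from $\nabla(\phi_\delta(c))$ on the right is controlled by a multiple of $\int_{\{\delta/2<c<\delta\}}\eta^2|\nabla c|^2\,dx$ (since $c\leq\delta$ on its support and $|\phi_\delta'|\leq 3/\delta$), which tends to $0$ because $\nabla c = 0$ a.e.\ on $\{c=0\}$ by Stampacchia's theorem applied to the Lipschitz function $c$, together with dominated convergence.
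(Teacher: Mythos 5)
Your argument is correct, and its skeleton is the same as the paper's: test the semi-stability form \eqref{eq:02} with $\xi=c\eta$ where $c$ is (essentially) $|\nabla u|$, integrate by parts to reach the form \eqref{eq:07}, use the differentiated equation $\Delta u_j+f'(u)u_j=0$ (equivalently Bochner) so that $f'(u)$ cancels, and identify $|D^2u|^2-|\nabla|\nabla u||^2$ with $|\nabla_T|\nabla u||^2+|A|^2|\nabla u|^2$ via the principal-direction frame --- this is exactly identity \eqref{eq:09}. Where you genuinely diverge is in the treatment of the critical set $\{\nabla u=0\}$: the paper deliberately avoids it by smoothing the multiplier, taking $c=\sqrt{|\nabla u|^2+\varepsilon^2}$, computing $\Delta c$ for this globally smooth function, discarding a nonnegative remainder, and letting $\varepsilon\downarrow0$; you instead keep $c=|\nabla u|$ and localize the test function with a cutoff $\phi_\delta(c)$ supported in $\{c\ge\delta/2\}$, which is closer to Sternberg--Zumbrun's original route. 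The paper's device buys a proof with no cutoff error analysis at all (every integration by parts is over all of $\Omega$ with smooth data); yours avoids the slightly messy explicit computation of $\Delta\sqrt{|\nabla u|^2+\varepsilon^2}$ at the price of an error estimate near $\{c=0\}$. On that estimate, one small point: expanding $|\nabla\eta_\delta|^2$ produces, besides the $(\phi_\delta')^2$ term you control, a cross term $2c^2\phi_\delta\phi_\delta'\,\eta\,\nabla\eta\cdot\nabla c$; it is harmless (it is $O(\delta)$ pointwise, since $c\le\delta$ and $|\phi_\delta'|\le 3/\delta$ on its support), and the quadratic term should be absorbed via Cauchy--Schwarz with a parameter sent to $0$ after $\delta$ so as not to double the constant on the right of \eqref{semi1} --- but both are routine and your conclusion stands.
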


This result (stated for a Neumann problem instead of a Dirichlet problem)
is Lemma 2.1 of \cite{SZ1} and Theorem 4.1 of 
\cite{SZ2}. The authors conceived and used the result to study 
qualitative properties of phase transitions in Allen-Cahn equations. 
For the sake of completeness, we give an elementary proof of it here. 
See Theorem 2.5 of \cite{FSV} for a quasilinear extension. 

\begin{proof}[Proof of Proposition \ref{Prop-semi-st}]
The semi-stability condition \eqref{eq:02} also holds, by approximation, 
for every Lipschitz function $\xi$ in $\overline{\Omega}$ with 
$\xi|_{\partial\Omega}\equiv0$. Now, take $\xi=c\eta$ in \eqref{eq:02}, 
where $c$ is a smooth function and $\eta$ Lipschitz in $\overline{\Omega}$ and 
$\eta|_{\partial\Omega}\equiv0$. A simple integration by parts gives that 
\begin{equation}\label{eq:07}
Q_u(c\eta)  = 
\int _{\Omega} c^{2}\left|\nabla \eta\right|^{2}
-\left(\Delta c+f'(u)c\right)c\eta ^{2}\ dx \geq 0.
\end{equation}
In contrast with \cite{SZ1,SZ2} (where they took 
$c=\left|\nabla u\right|$) and to avoid some considerations on the set 
$\left\{\left|\nabla u\right|=0\right\}$, we take
$$c=\sqrt{\left|\nabla u\right|^2+\varepsilon^{2}}$$
for a given $\varepsilon>0$. Note that $c$ is smooth.

Since $\Delta u+f(u)=0$ in $\Omega$, we have $\Delta u_j+f'(u)u_j=0$ in $\Omega$.
We use the notation $u_j=\partial_{x_j} u$ and also
$u_{ij}=\partial_{x_i x_j}u$. Using these equations, it is simple to verify 
that 
\begin{eqnarray*}
& &  \hspace{-1cm}\Delta c=\frac{1}{\left|\nabla u\right|^2+\varepsilon^{2}}
\left\{-f'(u)\left|\nabla u\right|^2\sqrt{\left|\nabla u\right|^2
+\varepsilon^{2}}\right.\\
&&\left.+\sum_{i,j}u_{ij}^2\sqrt{\left|\nabla u\right|^2+\varepsilon^{2}}
-\left(\sum_i\left(\sum_ju_{ij}u_j\right)^2\right)\frac{1}
{\sqrt{\left|\nabla u\right|^2+\varepsilon^{2}}}\right\},
\end{eqnarray*}
and thus
\begin{eqnarray*}
&& \hspace{-1.5cm}\left(\Delta + f'(u)\right)c=f'(u)\frac{\varepsilon^2}
{\sqrt{\left|\nabla u\right|^2+\varepsilon^{2}}}\\
&& \hspace{1cm}+\frac{1}{\sqrt{\left|\nabla u\right|^2+\varepsilon^{2}}}
\left\{\sum_{i,j}u_{ij}^2-\sum_i\left(\sum_{j}u_{ij}\frac{u_j}
{\sqrt{\left|\nabla u\right|^2+\varepsilon^{2}}}\right)^2\right\}.
\end{eqnarray*}

Using this equality in \eqref{eq:07}, we deduce 
\begin{eqnarray}\label{eq:081}
&&\hspace{-2cm}\int_{\Omega}\left(\left|\nabla u\right|^2+\varepsilon^{2}\right)
\left|\nabla \eta\right|^2dx
= \int_{\Omega}c^2
\left|\nabla \eta\right|^2dx\\
&\geq&\int_{\Omega}\left(\Delta c+f'(u)c\right)c\eta^2dx\nonumber\\
\label{eq:082}
&=&\int_{\Omega}f'(u)\varepsilon^2\eta^2dx\\
\label{eq:083}
&&+\int_{\Omega}\left\{\sum_{i,j}u_{ij}^2-
\sum_i\left(\sum_ju_{ij}\frac{u_j}{\sqrt{\left|\nabla u\right|^2
+\varepsilon^{2}}}\right)^2\right\}\eta^2dx .
\end{eqnarray}
The integrand in the last integral is nonnegative. Thus, we 
have 
\begin{eqnarray*}
&& \hspace{-1cm}\int_{\Omega}\left\{\sum_{i,j}u_{ij}^2-
\sum_i\left(\sum_ju_{ij}\frac{u_j}
{\sqrt{\left|\nabla u\right|^2
+\varepsilon^{2}}}\right)^2\right\}\eta^2dx\\
&&\geq\int_{\Omega\cap\left\{\left|\nabla u\right|>0\right\}}
\left\{\sum_{i,j}u_{ij}^2-
\sum_i\left(\sum_ju_{ij}\frac{u_j}{\sqrt{\left|\nabla u\right|^2
+\varepsilon^{2}}}\right)^2\right\}\eta^2dx\\
&&\geq\int_{\Omega\cap\left\{\left|\nabla u\right|>0\right\}}
\left\{\sum_{i,j}u_{ij}^2-\sum_i\left(\sum_ju_{ij}\frac{u_j}
{\left|\nabla u\right|}\right)^2\right\}\eta^2dx.
\end{eqnarray*}
{From} this and \eqref{eq:081}, \eqref{eq:082}, \eqref{eq:083}, we arrive at 
\begin{eqnarray*}
&&\hspace{-2cm}\int_{\Omega}\left(\left|\nabla u\right|^2
+\varepsilon^{2}\right)
\left|\nabla \eta\right|^2dx\\
&\geq&\int_{\Omega}f'(u)\varepsilon^2\eta^2dx\\
&&+\int_{\Omega\cap\left\{\left|\nabla u\right|>0\right\}}
\left\{\sum_{i,j}u_{ij}^2-\sum_i\left(\sum_ju_{ij}\frac{u_j}
{\left|\nabla u\right|}\right)^2\right\}\eta^2dx.
\end{eqnarray*}
We now let $\varepsilon\downarrow0$ to obtain 
\begin{eqnarray*}
\int_{\Omega}\left|\nabla u\right|^2
\left|\nabla \eta\right|^2dx
\geq\int_{\Omega\cap\left\{\left|\nabla u\right|>0\right\}}
\left\{\sum_{i,j}u_{ij}^2-\sum_i\left(\sum_ju_{ij}\frac{u_j}
{\left|\nabla u\right|}\right)^2\right\}\eta^2dx.
\end{eqnarray*}

We conclude the claimed inequality \eqref{semi1} of the 
proposition since
\begin{equation}\label{eq:09}
\sum_{i,j}u_{ij}^2-\sum_i\left(\sum_ju_{ij}\frac{u_j}
{\left|\nabla u\right|}\right)^2=\left|\nabla_T\left|
\nabla u\right|\right|^2+\left|A\right|^2\left|\nabla 
u\right|^2
\end{equation}
at every point $x\in\Omega\cap\left\{\left|\nabla 
u\right|>0\right\}$. This last equality can be easily checked 
assuming that $\nabla u(x)=\left(0,\ldots,0,u_{x_n}(x)\right)$ 
and looking at the quantities in \eqref{eq:09} in the orthonormal 
basis $\left\{e_1,\ldots,e_{n-1},\left(0,\ldots,0,1\right)
\right\}$, where $\left\{e_1,\ldots,e_{n-1}\right\}$ are the 
principal directions of the level set of $u$ through $x$.
See also Lemma~2.1 of \cite{SZ1} for a detailed proof of \eqref{eq:09}.
\end{proof}

Using Proposition \ref{Prop-semi-st} and Theorem \ref{ThmMS} we can now establish 
Theorem \ref{Thm1}.

\begin{proof}[Proof of Theorem \ref{Thm1}]
By elliptic regularity, the solution $u$ is smooth, that 
is, $u\in C^{\infty}(\overline\Omega)$. Recall that $u>0$ in $\Omega$.
Let us denote
$$T:=\max_{\Omega}u=\left\|u\right\|_{L^\infty (\Omega)}$$
and, for $s\in(0,T)$,
$$\Gamma_s:=\left\{x\in\Omega:u(x)=s\right\}.$$
By Sard's theorem, almost every $s\in(0,T)$ is a regular value of 
$u$. By definition, if $s$ is a regular value of $u$, then 
$\left|\nabla u(x)\right|>0$ for all $x\in\Omega$ such that 
$u(x)=s$ (i.e., for all $x\in\Gamma_s$). In particular, if $s$ is 
a regular value, $\Gamma_s$ is a $C^{\infty}$ immersed compact 
hypersurface of $\mathbb{R}^n$ without boundary. Later we will 
apply Theorem \ref{ThmMS} with $M=\Gamma_s$. Note here that, since 
$\Gamma_s$ could have a finite number of connected components, 
inequality \eqref{MSsob} for connected manifolds $M$ leads to the 
same inequality (with same constant) for $M$ with more than one 
component. 

Since $u$ is a semi-stable solution, we can use Proposition 
\ref{Prop-semi-st}. In \eqref{semi1} we take 
$$\eta(x)=\varphi(u(x))\ \ \text{for}\ \ x\in\Omega,$$ 
where $\varphi$ is a Lipschitz function in $\left[0,T\right]$ 
with
$$\varphi(0)=0.$$
The right hand side of \eqref{semi1} becomes
\begin{eqnarray*}
\int_{\Omega}\left|\nabla u\right|^2\left|\nabla\eta\right|^2dx&=&
\int_{\Omega}\left|\nabla u\right|^4\varphi'(u)^2dx\\
&=&\int_{0}^{T}\left(\int_{\Gamma_s}\left|\nabla u\right|^3dV_s
\right)\varphi'(s)^2ds,
\end{eqnarray*}
by the coarea formula. We have denoted by $dV_s$ the volume element 
in $\Gamma_s$. The integral in $ds$ is over the regular values 
of $u$, whose complement is of zero measure in $\left(0,T\right)$.

In the left hand side of \eqref{semi1} we integrate only on 
$\Omega\cap\left\{\left|\nabla u\right|>\delta\right\}$ for a given 
$\delta>0$, and thus inequality \eqref{semi1} remains valid. Since 
in this set $|\nabla u|$ is bounded away from zero, the coarea 
formula gives

\begin{eqnarray*}
&&\hspace{-1cm}\int_{0}^{T}\left(\int_{\Gamma_s}\left|\nabla u\right|^3dV_s
\right)\varphi'(s)^2ds\\
&&\geq\int_{\Omega\cap\left\{\left|\nabla 
u\right|>\delta\right\}}\left(\left|\nabla_T\left|
\nabla u\right|\right|^2+\left|A\right|^2\left|\nabla 
u\right|^2\right)\varphi(u)^2dx\\
&&=\int_{0}^{T}\left(\int_{\Gamma_s\cap\left\{\left|\nabla 
u\right|>\delta\right\}}\frac{1}{\left|\nabla u\right|}
\left(\left|\nabla_T\left|
\nabla u\right|\right|^2+\left|A\right|^2\left|\nabla 
u\right|^2\right)dV_s\right)\varphi(s)^2ds\\
&&=\int_{0}^{T}\left(\int_{\Gamma_s\cap\left\{\left|\nabla 
u\right|>\delta\right\}}
4\left|\nabla_T\left|
\nabla u\right|^{1/2}\right|^2+\left(\left|A\right|\left|\nabla 
u\right|^{1/2}\right)^{2}dV_s\right)\varphi(s)^2ds .
\end{eqnarray*}
Letting $\delta\downarrow0$ and using the monotone convergence theorem, 
we deduce that

\begin{equation}
\int_0^T h_1(s) \varphi(s)^2 \ ds
\leq
\int_0^T h_2(s)  \varphi'(s)^2 \ ds,
\label{semi3}
\end{equation}
for all Lipschitz functions 
$\varphi:\left[0,T\right]\rightarrow\mathbb{R}$ with 
$\varphi(0)=0$, where 
\begin{equation}\label{eq:27}
h_1(s):=\int_{\Gamma_s} 
 4|\nabla_T |\nabla u|^{1/2}|^2 +\left( |A||\nabla u|^{1/2} \right)^2\ dV_s
\end{equation}
and
\begin{equation}\label{eq:28}
h_2(s):=\int_{\Gamma_s} |\nabla u|^3\  dV_s 
\end{equation}
for every regular value $s$ of $u$. 

Inequality \eqref{semi3}, with $h_1$ and $h_2$ as defined above, will lead 
to our $L^{\infty}$ estimate of Theorem \ref{Thm1} after choosing an 
appropriate test function $\varphi$ in \eqref{semi3}. In dimensions 2 and 3 
we will simply use \eqref{semi3} and well known geometric inequalities 
about the curvature of manifolds (note that $h_1$ involves the curvature 
of the level sets of $u$). Instead, in dimension 4 we need the following 
additional tool. For $n\geq4$ we use the Michael-Simon and Allard 
Sobolev inequality \eqref{MSsob} with $M=\Gamma_s$, $p=2<m=n-1$, and 
$v= \left|\nabla u\right|^{1/2}$. Note that the mean curvature $H$ of 
$\Gamma_s$ satisfies $\left|H\right|\leq\left|A\right|$. We obtain
\begin{equation}\label{estimate2}
\left( \int_{\Gamma_s} |\nabla u|^\frac{n-1}{n-3} \ dV_s \right)^\frac{n-3}
{n-1}\leq C(n) h_1(s)
\end{equation}
for all regular values $s$ of $u$,  where $C(n)$ us a constant depending 
only on $n$. This estimate combined with \eqref{semi3} leads to 
\begin{equation}\label{estgenn}
\int_{0}^{T}\left(\int_{\Gamma_s} |\nabla u|^\frac{n-1}{n-3} \ dV_s 
\right)^\frac{n-3}{n-1}\varphi(s)^2ds\leq C(n)\int_{0}^{T}\left(\int_{\Gamma_s} 
|\nabla u|^3 \ dV_s \right)\varphi'(s)^2ds
\end{equation}
for all Lipschitz functions $\varphi$ in $[0,T]$ with $\varphi(0)=0$. We only know how 
to derive an $L^{\infty}$ estimate for $u$ (i.e., a bound on 
$T=\max_{\Omega}u$) from \eqref{estgenn} when the exponent $(n-1)/(n-3)$ on 
its left hand side is larger or equal than the one on the right hand side, 
i.e., 3. That is, we need $(n-1)/(n-3)\geq3$, which means $n\leq4$.

The rest of the proof differs in every dimension $n=4,3,$ and 2. But in  
all three cases it will be useful to denote 
\begin{equation}\label{defst}
B_t:=\frac{1}{t^2}\int_{\left\{u<t\right\}}\left|\nabla u\right|^4dx =
\frac{1}{t^2}\int_0^th_2(s)ds,
\end{equation}
where $t>0$ is a given positive constant as in the statement of the 
theorem. Note that the quantity $B_t$ is the main part of the right hand 
side of our estimate \eqref{est4}. Let us start with the

\smallskip

\underline{\it Case $n=4$}. 
It will be crucial to use the bound obtained in \eqref{estimate2}. It reads,
since $(n-1)/(n-3)=3$,
\begin{equation}\label{cont4}
h_2^{1/3}\leq C h_1 \qquad\text{a.e. in}\ (0,T)\ (\text{when}\ n=4),
\end{equation}
where $C$ is a universal constant. For every regular value $s$ of $u$, we 
have $0<h_2(s)$ and $h_1(s)<\infty$ (simply by their definition). 
This together with \eqref{cont4} gives $h_1/h_2\in\left(0,+\infty\right)$ 
a.e. in $\left(0,T\right)$. Thus, defining 
$$g_k(s):=\min\left\{k,\frac{h_1(s)}{h_2(s)}\right\}$$
for regular values $s$ and for a positive integer $k$, we have that 
$g_k\in L^{\infty}(0,T)$ and 
\begin{equation}\label{gknicr}
g_k(s)\nearrow\frac{h_1(s)}{h_2(s)}\in(0,+\infty) \qquad\text{as}\ 
k\uparrow\infty,\ \text{for}\ \text{a.e.}\ s\in(0,T).
\end{equation}
Since $g_k\in L^{\infty}(0,T)$, the function 
\begin{equation}\label{eq:211}
\varphi_k(s):=\left\{
\begin{array}{lll}
s/t&\textrm{if}&s\leq t,\\
\displaystyle \exp\left(
\frac{1}{\sqrt{2}}\int_t^s \sqrt{g_k(\tau)}\ d\tau\right)
&\textrm{if}&t<s\leq T,
\end{array}
\right.
\end{equation}
is well defined, Lipschitz in $\left[0,T\right]$, and satisfies 
$\varphi_k(0)=0$.

Since
$$h_2(\varphi_k')^2=h_2\frac{1}{2}g_k\varphi_k^2\leq\frac{1}{2}
h_1\varphi_k^2\qquad \text{in}\ (t,T),$$
\eqref{semi3} used with $\varphi=\varphi_k$ leads to
\begin{equation}\label{eq:213}
\int_t^T h_1\varphi_k^2 ds\leq \frac{2}{t^2}\int_0^t h_2\  ds=
\frac{2}{t^2}\int_{\{u< t\}}|\nabla u|^4\  dx=2 B_t.
\end{equation}
Recall that $B_t$ was defined in \eqref{defst} and that we 
need to establish $T-t\leq C B_t^{1/2}$. By \eqref{gknicr} 
we have
\begin{equation}\label{eq:214}
T-t=\int_t^Tds=\sup_{k\geq1}\int_t^T\sqrt[4]{\frac{h_2g_k}{h_1}}ds.
\end{equation}
Using \eqref{eq:213} and Cauchy-Schwarz, we have that
\begin{eqnarray}\label{eq:218}
\int_t^T\sqrt[4]{\frac{h_2g_k}{h_1}}ds&=&\int_t^T\left(\sqrt{h_1}
\varphi_k\right)\left(\sqrt[4]{\frac{h_2g_k}{h_1^3}}\frac{1}
{\varphi_k}\right)ds\\
&\leq& \left(2B_t\right)^{1/2}\left\{\int_t^T\sqrt{\frac{h_2g_k}{h_1^3}}
\frac{1}{\varphi_k^2}ds\right\}^{1/2} \nonumber \\ 
\label{2.20}
&\leq& \left(2B_t\right)^{1/2}\left\{C\int_t^T\sqrt{g_k}
\frac{1}{\varphi_k^2}ds\right\}^{1/2}. 
\end{eqnarray}
In the last inequality we have used our crucial estimate \eqref{cont4}.

Finally we bound the integral in \eqref{2.20}, using the definition 
\eqref{eq:211} of $\varphi_k$, as follows:
\begin{eqnarray*}
\int_t^T\sqrt{g_k}\frac{1}{\varphi_k^2}ds &=& \int_t^T\sqrt{g_k}
\frac{1}{\varphi_k^2}\frac{\varphi_k'}{\frac{1}{\sqrt{2}}
\sqrt{g_k}\varphi_k}ds\\
&=& \sqrt{2}\int_t^T\frac{\varphi'_k}{\varphi_k^3}ds = 
\frac{\sqrt{2}}{2}\left[\varphi_k^{-2}(s)\right]_{s=T}^{s=t} \\
&\leq&\frac{\sqrt{2}}{2}\varphi_k^{-2}(t)=\frac{\sqrt{2}}{2}.  
\end{eqnarray*}
This bound together with \eqref{eq:214},\eqref{eq:218}, and \eqref{2.20} finish 
the proof in dimension $n=4$. 

Let us now turn to the 

\smallskip

\underline{{\it Cases} $n=3$ {\it and} 2}. In these dimensions we take a simpler 
test function $\varphi$ in \eqref{semi3} than in dimension 4. We 
simply consider 
\begin{equation*}\label{eq:212}
\varphi(s)=\left\{
\begin{array}{lll}
s/t&\textrm{if}&s\leq t\\
1 &\textrm{if}&t<s.
\end{array}
\right.
\end{equation*}
With this choice of $\varphi$ and since 
$h_1(s)\geq\int_{\Gamma_s}\left|A\right|^2\left|\nabla u\right|dV_s\ $ 
---see definition \eqref{eq:27}---, inequality \eqref{semi3} leads to
\begin{equation}\label{eq:224}
\begin{split}
& \hspace{-2cm}\int_t^T\int_{\Gamma_s}
\left|A\right|^2\left|\nabla u\right|dV_sds
\leq\int_0^Th_1(s)\varphi(s)^2ds\\
& \leq \int_0^th_2(s)\frac{1}{t^2}ds = \frac{1}{t^2}
\int_{\left\{u<t\right\}}\left|\nabla u\right|^4dx=:B_t.
\end{split}
\end{equation}
This inequality and the ones that follow hold in 
every dimension $n$. It is at the end of the proof that 
we will need to assume $n\leq3$. 

Next, we use a well known geometric inequality for the curve 
$\Gamma_s$ ($n=2$) or the surface $\Gamma_s$ ($n=3$). It also 
holds in every dimension $n\geq2$ and it states
\begin{equation}\label{eq:225}
\left|\Gamma_s\right|^{\frac{n-2}{n-1}}\leq C(n) \int_{\Gamma_s}
\left|H\right|dV_s,
\end{equation}
where $H$ is the mean curvature of $\Gamma_s$, $C(n)$ is a constant 
depending only on $n$, and $s$ is a regular value of $u$. In 
dimension $n=2$ this simply follows from the Gauss-Bonnet formula. 
For $n\geq3$, \eqref{eq:225} is stated in Theorem 28.4.1 of 
\cite{BZ} and follows from the Michael-Simon and Allard Sobolev 
inequality (Theorem \ref{ThmMS} of our paper). Indeed, taking $v\equiv 1$ 
and $m=n-1>1=p$ in \eqref{MSsob}, we deduce \eqref{eq:225}. Note 
that \eqref{eq:225} also holds if $\Gamma_s$ is not connected 
(with the same constant $C(n)$ as for connected manifolds).

We also use the classical isoperimetric inequality,
\begin{equation}\label{eq:226}
V(s):=\left|\left\{u>s\right\}\right|\leq 
C(n)\left|\Gamma_s\right|^{\frac{n}{n-1}},
\end{equation}
which also holds, with same constant $C(n)$, in case 
$\left\{u>s\right\}$ is not connected. Now, \eqref{eq:225} and 
\eqref{eq:226} lead to 
\begin{equation*}
V(s)^{\frac{n-2}{n}}\leq C(n) \int_{\Gamma_s}\left|H\right|dV_s
\leq C(n)\left\{\int_{\Gamma_s}\left|A\right|^2\left|\nabla u\right| 
dV_s\right\}^{1/2}\left\{\int_{\Gamma_s}\frac{dV_s}
{\left|\nabla u\right|}\right\}^{1/2}
\end{equation*}
for all regular values $s$,
by Cauchy-Schwarz and since $\left|H\right|\leq\left|A\right|$. 
{From} this, we deduce 
\begin{eqnarray}\label{eq:227}
T-t=\int_t^T ds&\leq& \int_t^T C(n)\left\{\int_{\Gamma_s}
\left|A\right|^2\left|\nabla u\right| 
dV_s\right\}^{1/2}\cdot\\
\nonumber
&&\hspace{6mm}\cdot\left\{V(s)^{\frac{2(2-n)}{n}}
\int_{\Gamma_s}\frac{dV_s}
{\left|\nabla u\right|}\right\}^{1/2} ds\\
\nonumber
&\leq&C(n)\left\{\int_t^T \int_{\Gamma_s}
\left|A\right|^2\left|\nabla u\right| 
dV_sds\right\}^{1/2}\cdot\\
\nonumber
&&\hspace{6mm}\cdot\left\{\int_t^T V(s)^{\frac{2(2-n)}{n}}
\int_{\Gamma_s}\frac{dV_s}
{\left|\nabla u\right|}ds\right\}^{1/2}\\
\label{eq:227bis}
&\leq&C(n)B_t^{1/2}\left\{\int_t^T
V(s)^{\frac{2(2-n)}{n}}\int_{\Gamma_s}\frac{dV_s}
{\left|\nabla u\right|}ds\right\}^{1/2},
\end{eqnarray}
where we have used \eqref{eq:224} in the last inequality. 

Finally, since $V(s)=\left|\left\{u>s\right\}\right|$ is a 
nonincreasing function, it is differentiable almost everywhere 
and, by the coarea formula, 
\begin{equation*}
-V'(s)=\int_{\Gamma_s}\frac{dV_s}{\left|\nabla u\right|}
\qquad \text{for a.e.}\ s\in(0,T).
\end{equation*}
In addition, for $n\leq3$, $V(s)^{\frac{4-n}{n}}$ is 
nonincreasing in $s$ and thus its total variation satisfies
\begin{eqnarray*}
\left|\Omega\right|^{\frac{4-n}{n}}&\geq& V(t)^{\frac{4-n}{n}}
=\left[V(s)^{\frac{4-n}{n}}\right]^{s=t}_{s=T}\\
&\geq&\int_t^T \frac{4-n}{n} V(s)^{\frac{2(2-n)}{n}}
\left(-V'(s)\right)ds \\
&=&\frac{4-n}{n}\int_t^T V(s)^{\frac{2(2-n)}{n}}\int_{\Gamma_s}
\frac{dV_s}{\left|\nabla u\right|}ds.
\end{eqnarray*}
{From} this, \eqref{eq:227}, and \eqref{eq:227bis}, we conclude the desired inequality 
\begin{equation}
T-t\leq C(n) B_t^{1/2} \left|\Omega\right|^{(4-n)/(2n)},
\end{equation}
for $n\leq3$.

Note that this argument gives nothing for $n\geq 4$ since the
integral in \eqref{eq:227bis}, 
\begin{equation}
\int_t^T V(s)^{\frac{2(2-n)}{n}}\left(-V'(s)\right)ds 
= \int_0^{V(t)} \frac{dr}{r^{\frac{2(n-2)}{n}}},
\end{equation}
is not convergent at $s=T$ (i.e., $r=0$) because $2(n-2)/n\geq1$.
\end{proof}

\section{Proof of Theorems \ref{Thm3} and \ref{Thm2}}

In this last section we establish Theorem \ref{Thm3} and, as a 
simple consequence, Theorem \ref{Thm2}. They will follow easily 
from the following proposition. It states that, thanks to 
Theorem \ref{Thm1}, an $L^{\infty}(\Omega)$ estimate 
for a semi-stable solution follows from having an $L^{\infty}$ bound for the 
solution near the boundary of $\Omega$.

\begin{proposition}\label{prop}
Let $f$ be any $C^\infty$ function.
Let $\Omega\subset \mathbb{R}^n$ be any $C^\infty$ bounded domain.
Assume that $2\leq n\leq 4$.

Let $u$
be a classical semi-stable solution of \eqref{problem}. Assume that
\begin{equation}\label{dist}
u\geq c_3 \,\text{\rm dist} (\cdot,\partial \Omega) \ \ \text{in } \Omega
\end{equation}
and
\begin{equation}\label{linfK}
\Vert u\Vert_{L^\infty(\Omega_\varepsilon)}\leq c_4,
\ \ \text{ where }  \Omega_\varepsilon=\{x\in\Omega\, :\, \text{\rm dist}(x,\partial\Omega)
<\varepsilon\},
\end{equation}
for some positive constants $\varepsilon$, $c_3$, and $c_4$.

Then,
\begin{equation}\label{linf34}
\Vert u\Vert_{L^\infty(\Omega)} \leq C\left(\Omega,\varepsilon,
c_3,c_4,\Vert f\Vert_{L^\infty(\,[0,c_4]\,)} \right),
\end{equation}
where $C(\cdot)$ is a constant depending only on the
quantities within the parentheses.
\end{proposition}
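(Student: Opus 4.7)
The plan is to combine the main estimate of Theorem \ref{Thm1} with a quantitative up-to-the-boundary gradient bound coming from standard elliptic regularity. Set
$$
t_0:=\frac{c_3\,\varepsilon}{2}.
$$
Hypothesis \eqref{dist} forces $u>0$ in $\Omega$, so Theorem~\ref{Thm1} applies, and it gives $\mathrm{dist}(x,\partial\Omega)\le u(x)/c_3$. In particular, $u(x)<t_0$ implies $\mathrm{dist}(x,\partial\Omega)<\varepsilon/2$, so
$$
\{u<t_0\}\subset \Omega_{\varepsilon/2}\subset\Omega_\varepsilon .
$$

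The core step is to establish an a~priori bound $\Vert\nabla u\Vert_{L^\infty(\Omega_{\varepsilon/2})}\le M$ with $M=M(\Omega,\varepsilon,c_4,\Vert f\Vert_{L^\infty([0,c_4])})$. Since $u$ is a classical solution of $-\Delta u=f(u)$ with $u=0$ on $\partial\Omega$ and, by \eqref{linfK}, $\Vert f(u)\Vert_{L^\infty(\Omega_\varepsilon)}\le \Vert f\Vert_{L^\infty([0,c_4])}$, I would cover $\partial\Omega$ by finitely many charts flattening the boundary (the number and size of charts being determined by $\Omega$ alone), apply the standard boundary $W^{2,p}$ (Calder\'on--Zygmund) estimate with zero Dirichlet data in each chart, and complement this with an interior $W^{2,p}$ estimate on the inner slice $\overline{\Omega_{\varepsilon/2}}\setminus\Omega_{3\varepsilon/4}$. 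Taking $p>n$ and using Sobolev embedding then produces the claimed $L^\infty$ bound on $\nabla u$. The a~priori input entering these estimates is only $\Vert u\Vert_{L^\infty(\Omega_\varepsilon)}\le c_4$ and $\Vert f(u)\Vert_{L^\infty(\Omega_\varepsilon)}\le \Vert f\Vert_{L^\infty([0,c_4])}$, while the elliptic constants depend only on $\Omega$, $\varepsilon$ and $p$; hence $M$ depends only on the advertised quantities.

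Finally, inserting the pointwise bound $|\nabla u|\le M$ on $\{u<t_0\}\subset \Omega_{\varepsilon/2}$ into estimate \eqref{est4} of Theorem \ref{Thm1} with $t=t_0$, one gets
$$
\int_{\{u<t_0\}}|\nabla u|^4\,dx \le M^4\,|\Omega|,
$$
and therefore
$$
\Vert u\Vert_{L^\infty(\Omega)}\le t_0+\frac{C\,M^2\,|\Omega|^{(4-n)/(2n)+1/2}}{t_0},
$$
which is an estimate of the required form \eqref{linf34}. The only delicate point is the quantitative gradient bound of the second step: although it is a routine consequence of the $C^\infty$ smoothness of $\partial\Omega$ plus Calder\'on--Zygmund and Sobolev estimates, one has to track dependencies carefully to ensure that $M$ is independent of $u$ beyond its $L^\infty$ control in $\Omega_\varepsilon$ and the corresponding bound on $f(u)$.
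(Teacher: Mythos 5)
Your argument is correct and follows essentially the same route as the paper: choose $t=c_3\varepsilon/2$ in Theorem~\ref{Thm1} so that \eqref{dist} gives $\{u<t\}\subset\Omega_{\varepsilon/2}$, and then control $u$ on $\Omega_{\varepsilon/2}$ by interior and boundary elliptic estimates for the Poisson equation with right-hand side bounded by $\Vert f\Vert_{L^\infty([0,c_4])}$ thanks to \eqref{linfK}. The only cosmetic difference is that you push the elliptic regularity to a pointwise bound on $|\nabla u|$ (via $W^{2,p}$ with $p>n$ and Sobolev embedding), whereas the paper stops at the $W^{1,4}(\Omega_{\varepsilon/2})$ bound, which is all that the right-hand side of \eqref{est4} requires.
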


\begin{proof}
By taking $\varepsilon$ smaller if necessary, we may assume that
$\Omega_\delta=\{x\in\Omega\, :\, \text{\rm dist}(x,\partial\Omega)
<\delta\}$ is $C^\infty$ for every $0<\delta<\varepsilon$.

We use Theorem~\ref{Thm1} with the choice 
$$
t=c_3\frac{\varepsilon}{2}.
$$
By \eqref{dist}, the set $\{u<t\}$ in the right hand side of our main
estimate, \eqref{est4},  satisfies
$$
\{u<t\}\subset \Omega_{\varepsilon/2}.
$$
Thus, it suffices to bound $\Vert u\Vert_{W^{1,4}(\Omega_{\varepsilon/2})}$.

But $u$ is a solution of $-\Delta u= f(u)$ in $\Omega_\varepsilon$ and
$u=0$ on $\partial\Omega$ (which is one part of $\partial\Omega_\varepsilon$).
On the other hand, $\partial\Omega \cup \Omega_{\varepsilon/2}$ 
has compact closure 
contained in $\partial\Omega \cup \Omega_\varepsilon$, and both sets
are $C^\infty$.
By \eqref{linfK}, 
$\Vert u\Vert_{L^\infty(\Omega_\varepsilon)}\leq c_4$ and thus the right
hand side of the equation satisfies
$\Vert f(u)\Vert_{L^\infty(\Omega_\varepsilon)}\leq \Vert f\Vert_{L^\infty(\, 
[0,c_4]\, )}$. Hence, by interior and boundary estimates for the linear 
Poisson equation,
we deduce a bound $\Vert u\Vert_{W^{1,4}(\Omega_{\varepsilon/2})}$ depending
on the quantities in \eqref{linf34}. 
\end{proof}

The $L^\infty$ bound \eqref{linfK} in a neighborhood of $\partial \Omega$ is known to 
hold for every nonlinearity $f$ when $\Omega$ is a convex domain
(in every dimension $n\geq 2$). This is 
proved using the moving planes method and holds for every positive solution 
---not only for semi-stable solutions. The precise statement is the following.

\begin{proposition}[\cite{GNN, FLN, CL}]\label{Prop4}
Let $f$ be any locally Lipschitz function and let $\Omega\subset\R^{n}$
be a $C^\infty$ bounded domain. Let $u$ be any 
positive classical solution of \eqref{problem}.

If $\Omega$ is convex, then there exist positive constants $\rho$ 
and $\gamma$ depending only on the domain $\Omega$ such that
for every $x\in\Omega$ with $\text{\rm dist}(x,\partial\Omega)<\rho$,
there exists a set $I_x$ with the following properties: 
\begin{equation}\label{ix}
|I_x|\geq\gamma \qquad\text{and}\qquad
u(x) \leq u(y) \ \text{ for all }  y\in I_x.
\end{equation}
As a consequence,
\begin{equation}\label{lix}
\Vert u\Vert_{L^\infty(\Omega_\rho)}\leq \frac{1}{\gamma} \Vert u\Vert_{L^1 (\Omega)},
\ \ \text{ where }  \Omega_\rho=\{x\in\Omega\, :\, \text{\rm dist}(x,\partial\Omega)
<\rho\}.
\end{equation}

If $\Omega$ is not convex but we assume $n=2$ and $f\geq0$, then 
\eqref{lix} also holds for some constants $\rho$ and $\gamma$ 
depending only on $\Omega$.
\end{proposition}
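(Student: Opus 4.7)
The plan is to deduce both statements from the moving planes method of Gidas--Ni--Nirenberg, in the boundary-cap form. For the convex case, I first use that the $C^\infty$ plus convex hypothesis provides, at each $p\in\partial\Omega$, an inward unit direction $e_p$ and a threshold $\mu_p>0$ such that the reflection $R^p_\lambda$ through the hyperplane $T^p_\lambda=\{x:(x-p)\cdot e_p=\lambda\}$ maps the cap $\Sigma^p_\lambda:=\{x\in\Omega:0<(x-p)\cdot e_p<\lambda\}$ into $\Omega$ for every $\lambda\in(0,\mu_p]$. Then the standard moving planes iteration compares $u$ and $u\circ R^p_\lambda$ --- both of which solve $-\Delta v=f(v)$ in $\Sigma^p_\lambda$, with $u\circ R^p_\lambda\geq u$ on $\partial\Sigma^p_\lambda$ --- using the maximum principle in narrow domains and the Hopf lemma, yielding the pointwise monotonicity $u(x)\leq u(R^p_\lambda x)$ for every $x\in\Sigma^p_\lambda$.

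Next, by compactness of $\partial\Omega$ I extract uniform constants $\rho,\mu_0>0$ depending only on $\Omega$ so that this monotonicity applies at every boundary point with threshold $\mu_0$, and such that the caps $\{\Sigma^p_{\mu_0}\}_{p\in\partial\Omega}$ cover $\Omega_\rho$. For $x\in\Omega_\rho$, fix an admissible $p$ and $\lambda=(x-p)\cdot e_p<\mu_0$; I define $I_x$ as the reflected image $R^p_\lambda\bigl(B_r(x)\cap\Sigma^p_\lambda\bigr)$ for a small $r=r(\Omega)>0$ chosen so that $B_r(x)\subset\Sigma^p_\lambda$ uniformly for $x\in\Omega_\rho$ (alternatively, one varies $\lambda$ over a small interval and takes the union of reflected images of $x$). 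Since reflections are isometries, the uniform geometry gives $|I_x|\geq\gamma$ for a constant $\gamma=\gamma(\Omega)>0$, and the pointwise inequality $u(x)\leq u(y)$ for $y\in I_x$ follows from the previous step (applied at a one-parameter family of reflecting planes between $T^p_\lambda$ and $T^p_{\lambda+r}$, one obtains $u(x)\leq u(y)$ for every $y$ in the reflected set). Then \eqref{lix} is obtained by averaging: $u(x)\leq |I_x|^{-1}\int_{I_x}u\,dy\leq\gamma^{-1}\|u\|_{L^1(\Omega)}$.

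For the nonconvex $n=2$ case with $f\geq 0$, convexity of $\Omega$ is replaced by the fact that a $C^2$ curve in $\R^2$ has bounded curvature, so small enough caps near each boundary point still reflect into $\Omega$; the admissible cap size is controlled only by the minimal radius of curvature of $\partial\Omega$. The sign hypothesis $f\geq 0$ is essential because $u\circ R^p_\lambda$ is then superharmonic in the reflected cap, which is exactly what is needed to run the maximum principle comparison with $u$ without a symmetric matching of the PDE across the two sides. The remainder (construction of $I_x$, averaging) is identical to the convex case. The main obstacle I anticipate is precisely this nonconvex two-dimensional step: one must simultaneously (i) control the protrusion of the reflected cap outside $\Omega$ using only $C^2$ regularity of $\partial\Omega$, and (ii) make sure the moving planes inequality $u(x)\leq u(R^p_\lambda x)$ survives on a cap whose reflection is not entirely inside $\Omega$, which is exactly where the sign condition $f\geq 0$ is used. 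By contrast, the convex case is routine once one organises the uniform constants $\rho,\gamma,\mu_0$ by compactness.
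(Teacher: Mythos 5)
For the convex case your argument is essentially the one the paper sketches (moving planes near tangent hyperplanes, uniform constants by compactness and convexity, then averaging over a set $I_x$ of uniformly positive measure). One detail to fix: your primary choice $I_x=R^p_\lambda\bigl(B_r(x)\cap\Sigma^p_\lambda\bigr)$ does not have measure bounded below, because the cap $\Sigma^p_\lambda$ is a slab of width $\lambda\approx\mathrm{dist}(x,\partial\Omega)$, so $|B_r(x)\cap\Sigma^p_\lambda|\to0$ as $x\to\partial\Omega$; and your parenthetical alternative (varying only $\lambda$) sweeps out a segment, which has measure zero for $n\geq2$. The correct fix, and what the paper does, is to vary the \emph{direction} of reflection as well: for each boundary point $y$ one gets monotonicity of $u(y-se)$ in $s\in[0,s_0]$ for all unit $e$ with $e\cdot\nu(y)\geq\alpha$, and $I_x$ is then a truncated open cone with vertex at $x$, whose measure is bounded below by a constant depending only on $\Omega$.

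The genuine gap is in the nonconvex $n=2$ case. It is not true that $C^2$ regularity (bounded curvature) of $\partial\Omega$ lets small caps reflect into $\Omega$: at a boundary point where $\Omega$ is locally concave (e.g.\ the inner boundary of an annulus), the cap $\{x\in\Omega:(x-p)\cdot e<\lambda\}$ cut off by a hyperplane close to the tangent plane contains portions of $\Omega$ far from $p$, and their reflections leave $\Omega$ no matter how small $\lambda$ is; one cannot even evaluate $u\circ R^p_\lambda$ there, so the comparison never starts. Your claim that $f\geq0$ rescues this via superharmonicity of $u\circ R^p_\lambda$ is also not the right mechanism: the moving planes comparison is run on the difference $w=u\circ R^p_\lambda-u$, which satisfies a linear equation $\Delta w+c(x)w=0$, and superharmonicity of one of the two functions plays no role. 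The actual argument of Chen--Li (and of de Figueiredo--Lions--Nussbaum in the subcritical case) applies a Kelvin transform centered at a suitable point outside $\Omega$, which renders the image domain locally convex near the relevant boundary point; moving planes is then applied to the transformed function, and the hypothesis $f\geq0$ (in $n=2$) is what keeps the transformed equation compatible with the maximum principle. Without this Kelvin transform step your proof of the second part of the proposition does not go through.
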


The proof of this proposition uses the moving planes method of 
Gidas-Ni-Nirenberg~\cite{GNN}. Assume that $\Omega$ is $C^\infty$ 
and convex, $n\geq 2$. For $y\in\partial\Omega$, let $\nu (y)$ be 
the unit outward normal to $\Omega$ at $y$.
There exist positive constants $s_0$ and $\alpha$ depending only
on the convex domain $\Omega$ such that, for every $y\in\partial\Omega$
and every $e\in\R^n$ with $|e|=1$ and $e\cdot \nu (y)\geq\alpha$,
we have that $u(y - se)$ is nondecreasing in $s\in [0, s_0]$. This fact 
follows from the reflection method applied
to planes close to those tangent to $\Omega$ at 
$\partial \Omega$. By the convexity of $\Omega$, the reflected caps 
will be contained in $\Omega$. The previous monotonicity fact
leads to \eqref{ix}, where $I_x$ is a truncated open cone with vertex at $x$. 
If all curvatures of $\partial\Omega$ 
are positive this is quite simple to prove and, as mentioned in page 45 
of de Figuereido-Lions-Nussbaum \cite{FLN}, can also be proved for 
convex domains with a little more of care. 

In \cite{FLN} it is also proved that the boundary estimate \eqref{lix} 
holds also for general (nonconvex) smooth domains $\Omega$ if the 
nonlinearity $f$ is subcritical in the sense that
\begin{equation}\label{subcr}
f(t)t^{-\frac{n+2}{n-2}}\ \text{is nonincreasing in}\ t\in [0,+\infty),
\end{equation}
when $n\geq3$. For these nonlinearities, we do not need to assume the 
convexity of $\Omega$ in our results. This result is proved with 
the aid of some Kelvin transforms ---after which one can use the 
moving planes method; see \cite{FLN}. 

When $n=2$, Chen and Li \cite{CL} use this Kelvin transform method 
to establish the boundary estimate \eqref{lix} in nonconvex domains 
$\Omega\subset\mathbb{R}^2$ assuming only $f\geq0$ ---as stated 
at the end of Proposition \ref{Prop4}.

Using Propositions \ref{prop} and \ref{Prop4}, we can now give the

\begin{proof}[Proof of Theorem \ref{Thm3}]
We use Proposition~\ref{prop}.
We assume $f\geq 0$ ---and
also $\Omega$ convex in case $n\in\{3,4\}$. Let $u_k$ be a 
sequence of classical positive semi-stable solutions of 
\eqref{problem} converging to $u$ in $L^1(\Omega)$. 

For $x\in\Omega$ and $v:\Omega\to\R$, denote
$$
\delta (x)= \text{\rm dist}(x,\partial\Omega) \qquad 
\text{and}\qquad \Vert v\Vert_{L^1_\delta(\Omega)}=
\Vert v\delta \Vert_{L^1(\Omega)}.
$$
By Proposition~\ref{Prop4},
\begin{equation}\label{l1}
\Vert u_k\Vert_{L^\infty(\Omega_\rho)}\leq \frac{1}{\gamma} \Vert 
u_k\Vert_{L^1 (\Omega)} \longrightarrow \frac{1}{\gamma} \Vert 
u\Vert_{L^1 (\Omega)},
\end{equation}
as $k\to\infty$, where $\rho$ and $\gamma$ are positive constants
depending only on $\Omega$.

Next, since $f\geq 0$, we can use a simple estimate for the linear Poisson equation
$-\Delta u_k=h_k(x):=f(u_k(x))\geq 0$ with zero Dirichlet boundary conditions.
It states that
\begin{equation}\label{lower}
\frac{u_k}{\delta}\geq  c\Vert f(u_k)\Vert_{L^1_\delta (\Omega)}\qquad\text{in }
\Omega,
\end{equation}
for some positive constant $c$ depending only on $\Omega$ ---see for
instance Lemma~3.2 of \cite{BC} for a simple proof.

Multiply \eqref{problem} (with $u$ replaced by $u_k$) 
by the first Dirichlet eigenfunction of $-\Delta$
in $\Omega$ and integrate twice by parts. We deduce that 
$\Vert u_k\Vert_{L^1_\delta (\Omega)}$ and $\Vert f(u_k)\Vert_{L^1_\delta (\Omega)}$ 
are comparable up to multiplicative constants depending only on $\Omega$.
Multiplying \eqref{problem} now by the solution $w$ of
\begin{equation}\label{1rhs}
\left\{
\begin{array}{rcll}
-\Delta w &=&1&\textrm{in }\Omega\\
w&=&0&\textrm{on }\partial \Omega,
\end{array}\right.
\end{equation}
we deduce
that also $\Vert u_k\Vert_{L^1 (\Omega)}$
is comparable to the two previous quantities.
Recall that $\Vert u_k\Vert_{L^1 (\Omega)}\to \Vert u\Vert_{L^1 (\Omega)}>0$. 
Hence, the right hand side of \eqref{lower} is bounded below by a positive 
constant independent of $k$. 

As a consequence of this lower bound and of \eqref{l1}, Proposition~\ref{prop}
gives a uniform $L^\infty(\Omega)$ estimate for all solutions $u_k$.
Letting $k\to\infty$ we deduce $u\in L^\infty(\Omega)$, as claimed in
part (i) of the theorem.

To prove part (ii), we simply make more precise the constants
in \eqref{l1} and \eqref{lower}. Since we now assume $f\geq c_1>0$, we have
that $u_k\geq c_1w \geq c_1c\,\delta=c_1c\,\text{\rm dist}(\cdot,\partial\Omega)$ 
in $\Omega$, where $w$ is the solution of \eqref{1rhs} and $c$ depends only on~$\Omega$.
This is estimate \eqref{dist} of Proposition~\ref{prop}.

Finally, 
we multiply \eqref{problem} (with $u$ replaced by $u_k$) 
by the first Dirichlet eigenfunction of $-\Delta$
in $\Omega$ and integrate twice by parts. Using that $f(s) \geq\mu s-c_2$
for all $s$ and that $\mu >\lambda_1$, we obtain a control
$\Vert u_k \Vert_{L^1_\delta (\Omega)}\leq \overline C=\overline C(\Omega,\mu, c_2)$ 
and thus 
also for $\left\|u_k\right\|_{L^1(\Omega)}$ as mentioned before. 
Now, this estimate combined with \eqref{l1} give an estimate
as \eqref{linfK}. Estimate \eqref{linf34} in Proposition~\ref{prop} gives the desired
conclusion \eqref{linf} of Theorem~\ref{Thm3}.
\end{proof}

Theorem \ref{Thm2} on the boundedness of the extremal solution $u^*$ follows 
easily from Theorem \ref{Thm3}.

\begin{proof}[Proof of Theorem \ref{Thm2}]
We extend $g$ in a $C^1$ manner to all of $\R$ with $g$ nondecreasing 
and $g\geq g(0)/2$ in $\R$. 
Recall the the extremal (weak) solution $u^*$ is the 
increasing $L^1$ limit, as $\lambda\uparrow\lambda^*$, of the minimal
solutions $u_\lambda$ of $(\pbla)$. In addition, for $\lambda <\lambda^*$,
$u_\lambda$ is a $C^2$ semi-stable solution of $(\pbla)$ ---see Remark 
\ref{rem-semi} in the introduction.

If $g$ is $C^\infty$ we simply apply part (ii) of Theorem~\ref{Thm3} with $f=\lambda g$
for $\lambda^*/2 < \lambda <\lambda^*$. Using that $g$ satisfies
\eqref{nonlg}, we can verify \eqref{assf} and obtain estimates for
$\Vert u_\lambda\Vert_{L^\infty(\Omega)}$ which are uniform in $\lambda$.
Letting $\lambda\uparrow\lambda^*$ we conclude that $u^*\in L^\infty(\Omega)$.

In case that $g\in C^1$ is not $C^\infty$, let $\rho_k$ be a $C^\infty$ mollifier 
with support in $(0,1/k)$, of the form $\rho_k(\beta)=k\rho(k\beta)$.
We replace $g$ by
$$
g_k (s) =\int_{s-1/k}^s g(\tau) \rho_k (s-\tau) \ d\tau
=\int_{0}^1 g(s-\beta /k) \rho (\beta) \ d\beta .
$$
For all $k$, we have that $g_k\leq g_{k+1}\leq g$ in~$\R$,
$g_k$ is $C^\infty$, and (as $g$) nondecreasing. In addition, $g_k$ satisfies all conditions
in \eqref{nonlg}. Since $g(u^*)\geq g_k(u^*)$, $u^*$ is a
weak supersolution for problem $(\pblaest)$ with $g$ replaced
by $g_k$. By the monotone iteration procedure, it 
follows that the extremal
parameter for $g_k$,  $\lambda^*_k$, satisfies $\lambda^*\leq\lambda^*_k$.
Hence $u^k_{\lambda^*-1/k}$, the solution for problem $(\pblaest)$ 
with $g$ replaced
by $g_k$ and with $\lambda=\lambda^*-1/k$ is classical.
Thus, we can apply Theorem~\ref{Thm3} with $f=\lambda g_k$ and
$\lambda=\lambda^*-1/k$ to obtain an $L^{\infty}(\Omega)$ bound for 
$u_{\lambda^*-1/k}^k$ independent of~$k$. 
Note that $u_{\lambda^*-1/k}^{k} \leq u_{\lambda^*-1/(k+1)}^{k}$ and that,
since $g_k\leq g_{k+1}\leq g$,  $u_{\lambda^*-1/(k+1)}^{k}\leq u_{\lambda^*-1/(k+1)}^{k+1}
\leq u_{\lambda^*}=u^*$. Thus, 
$u^k_{\lambda^*-1/k}$ 
increases in $L^1(\Omega)$ towards a solution of $(\pblaest)$ smaller or equal
than $u^*$, and hence identically $u^*$.  {From} the $L^{\infty}(\Omega)$ bound for 
$u_{\lambda^*-1/k}^k$ independent of $k$, we conclude 
$u^*\in L^\infty(\Omega)$.
\end{proof}

\noindent
\textbf{Acknowledgment.} The author would like to thank Manel 
Sanch\'on for simplifying the proof of Theorem \ref{Thm1} in 
dimension $n=3$. His simpler proof is the one given in this paper.

\end{document}